\documentclass{amsart}   
\usepackage{graphicx}
\usepackage{amsfonts}
\usepackage{url}
\usepackage{hyperref} 
\hypersetup{backref,pdfpagemode=FullScreen,colorlinks=true}
\usepackage{amsmath}
\usepackage{amssymb}
\usepackage{amscd}
\usepackage{color}
\usepackage{amsthm}
\usepackage{bm}
\usepackage{indentfirst}
\usepackage[hmargin=3cm,vmargin=3cm]{geometry}
\usepackage[all, cmtip]{xy}
\numberwithin{equation}{section}
 
\newtheorem{theorem}[equation]{Theorem} 
 
\newtheorem{proposition}[equation]{Proposition}
 
\newtheorem{lemma}[equation]{Lemma} 
 
\newtheorem{corollary}[equation]{Corollary} 
 
\newtheorem{conjecture}{Conjecture}

\newtheorem{definition}[equation]{Definition}
\theoremstyle{definition}

\theoremstyle{remark}

\newtheorem{remark}[equation]{Remark}

\DeclareMathOperator {\sign} {sign}
\DeclareMathOperator {\Id} {Id}

\DeclareMathOperator {\mult} {mult}

\DeclareMathOperator {\Det} {Det}

\begin{document}
\title{Extended Fuller index, sky catastrophes and the Seifert conjecture}
\author{Yasha Savelyev}
\email{yasha.savelyev@gmail.com}
\address{University of Colima, CUICBAS}
\keywords{Fuller index, sky catastrophe, Seifert conjecture, Reeb vector fields}
\subjclass {54H20}
\begin{abstract}  We 
extend the classical Fuller index, and use this to prove that for a certain
general class of vector fields $X$ on a compact smooth manifold, if a homotopy
of smooth non-singular vector fields starting at $X$ has no sky
catastrophes as defined by the paper, then the time 1 limit of the
homotopy has
periodic orbits. This class of vector fields includes the Hopf vector field on
$S ^{2k+1} $. A sky catastrophe, is a kind of bifurcation originally discovered
by Fuller. This answers a natural question that existed since the time of
Fuller's foundational papers. 
 We also put strong constraints on the kind of
 sky-catastrophes that may appear for homotopies of Reeb vector fields.
   \end{abstract}
 \maketitle
\section{Introduction}
The original Seifert conjecture  \cite{citeSeifert} asked if a non-singular vector field on $S
^{3} $ must have a periodic orbit. 
In this formulation the answer was shown to
be no for $C
^{1} $ vector fields by
Schweitzer \cite{citeSchweitzerC1Counterexample}, for $C ^{2} $ vector fields by
Harrison \cite{citeHarrison} and later for $C ^{\infty} $ vector fields by Kuperberg \cite{citeKKuperbergSmoothCounterexample}. A $C^1$ volume preserving counter-example is given by Kuperberg in \cite{citeKuperbergvolumepreserving}. The Hamiltonian analogue was given a counterexample in Ginzburg and G\"urel \cite{citeGinzburgHamiltonianSeifert}.
For a vector field $X$ $C^0$ close to the Hopf vector field it was
shown to hold by Seifert and later by Fuller 
\cite{citeFullerIndex} in his 1967 paper, using his Fuller index. Part of the importance of the  $C
^{0} $ condition for Fuller is
that it rules out ``sky catastrophes'' for an appropriate
homotopy of non-singular vector
fields connecting $X$ to the Hopf vector field. The latter ``sky catastrophes'' 
are the last discovered kind of bifurcations
originally constructed by Fuller himself \cite{citeFullerBlueSky}. 
He constructs a smooth family $\{X _{t} \}, t \in [0,1] $ of vector fields
on a solid torus, 
for which there is a continuous (and isolated) family of $\{X _{t}\} $ periodic orbits
$\{o _{t}\} $,  with the period of $o _{t} $ going
to infinity as $t \mapsto 1$, and so that for $t =1$ the orbit
disappears. We can make the following slightly more general preliminary definition.   
\begin{definition} [Preliminary] 
  A \textbf{\emph{sky catastrophe}} for a smooth family $\{X _{t} \}$, $t \in [0,1]$,
   of vector
   fields on a manifold $M$ is a continuous family of closed orbits $\tau \mapsto o _{t _{\tau} }$, $o _{t _{\tau} } $ is a non-constant periodic orbit of $X _{t _{\tau} } $, $\tau \in [0, \infty)$, such that the period of $o _{t _{\tau} } $ unbounded from above.
\end{definition}
These sky catastrophes (and their more robust analogues called blue sky catastrophes) turned out to be common in many kinds of systems
appearing in nature and have been studied on their own, see for instance
Shilnikov-Turaev~\cite{citeShilnikovTuraevBlueSky}.

However since the time of Fuller's original papers it has not been
understood if this the only thing that can go wrong. That is if without 
existence of a ``sky catastrophe'' in an appropriate general sense, the time 1
limit of a homotopy of smooth non-singular vector fields on $S ^{2n+1} $
starting at the Hopf vector field must have a periodic orbit.
The difficulty in answering this is that
although our orbits cannot ``disappear into the sky'', as there are infinitely
many of them, they may ``cancel each other out'' even if the Fuller index is
``locally positive'' - that is the index of isolated components in the orbit space is positive.
In the $C ^{0} $ nearby case this cancellation is prevented as orbits from isolated components of the orbit space may not
interact.
The reader may think of trying to make sense of the
infinite sum $$ (5-1) + (5-1) + \ldots + (5-1) + \ldots.
$$ 
While generally meaningless it has some meaning if we are not allowed to move
the terms out of the parentheses.
So one has to develop a version
of Fuller's index which precludes such total cancellation in general.
This is what we do here and using this answer affirmatively the above question, and for more general kinds of manifolds and vector fields. 

First we
define our general notion of a ``sky catastrophe''.
Given a homotopy of smooth vector fields $\{X _{t}
\}$ on $M$ we define the space of non-constant periodic orbits of $\{X _{t} \}$: 
\begin{multline} \label{eqSfirst}
S = S (\{X _{t}
\}) = \{(o, p, t) \in LM \times (0, \infty) \times [0,1]
\,\left .  \right |
\, \\ \text{ $o: \mathbb{R}/\mathbb{Z} \to M $  is a
non-constant periodic orbit of $\frac{1}{p} X _{t} $} \}.
\end{multline}
Here $LM$ denotes the free loop space.
We have an embedding 
$$emb: S \hookrightarrow M \times (0,\infty)
\times [0,1]$$ given by $(o, p, t) \mapsto (o (0), p, t) $ and $S$ is given the
corresponding subspace
topology. Further on the same kind of topology will be used on related spaces.
(It is the same as the induced topology from compact-open or Frechet
topology on $LM$.) 
\begin{definition}
Suppose that $\{X _{t} \}$, $t \in [0,1]$ is a smooth family of vector fields on
   $M$.
   We shall say that $\{X _{t} \}$ has a \textbf{\emph{sky catastrophe}}, if
   there is an element $$y \in S \cap \left (LM  \times (0, \infty) \times \{0\} \right ),$$  so
that there is no open-closed and bounded from above subset of $S$ containing
$y$, where bounded from above means
that the projection to the component $(0,\infty)$ is bounded from above.
\end{definition}
Then clearly Fuller's catastrophe is a special case of the above definition.
\begin{remark}
The above general definition coincides with the preliminary definition if $M$ is
compact and the
connected components of $S
(\{X _{t} \})$ are open and path-connected, which likely happens generically.
\end{remark}
We point out that when $M$ is compact a subset of $S$ is compact if and only if it is closed and its
projection to $(0,\infty)$ has a non-zero bound from below.
This of course is immediate by existence of the topological embedding $emb$ above.
Thus when $\{X _{t} \}$ is a homotopy of non-singular vector fields, and $M$ is
compact,
we may replace the open-closed and bounded from above condition by the more
technically useful open and compact
condition.

\begin{theorem} \label{conj:preliminary} Let $X = X _{1} $ be a smooth non-singular vector field on $S ^{2k+1} $ homotopic to
the Hopf vector field $H = X _{0} $ through  homotopy  $\{X _{t} \}$ of smooth non-singular vector fields.
Suppose that $\{X _{t} \}$ has no sky catastrophes
then $X$ has periodic orbits.
\end{theorem}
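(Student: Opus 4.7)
\begin{pfs}
The strategy is to apply the extended Fuller index developed earlier in the paper. First I would fix a base point $y_0 = (o_0, 1, 0) \in S(\{X_t\})$, where $o_0$ is a simple periodic orbit of the Hopf vector field $H = X_0$. Since $H$ generates a free circle action on $S^{2k+1}$ with orbit space $\mathbb{CP}^k$, the simple Hopf orbits sweep out a connected component $C_0 \subset S \cap (LM \times (0,\infty) \times \{0\})$ canonically identified with $\mathbb{CP}^k$.

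By the no sky catastrophe hypothesis applied to $y_0$, there exists an open-closed subset $U \subset S$ containing $y_0$ whose projection to $(0,\infty)$ is bounded from above. Since $M = S^{2k+1}$ is compact and each $X_t$ is non-singular, the periods of periodic orbits of $\{X_t\}$ are bounded below by a positive constant uniformly in $t$; combined with the period upper bound and the topological embedding $emb$ from the introduction, this forces $U$ to be compact. Being open-closed in $S$ and containing $y_0 \in C_0$, the compact set $U$ must then contain all of the connected component $C_0$.

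I would next compute the extended Fuller index $i(U, \{X_t\})$ at $t=0$. The slice $U \cap \{t=0\}$ contains $C_0 \cong \mathbb{CP}^k$ together with finitely many components coming from multiply covered Hopf orbits of bounded period. A local model calculation analogous to the classical Fuller computation for the Hopf flow yields a nonzero rational value, essentially a positive combination of rescaled Euler characteristics $\chi(\mathbb{CP}^k) = k+1$, which do not cancel since they share a common sign. By the homotopy invariance of the extended index, which is the central technical feature distinguishing it from the classical Fuller index, this value is preserved along $\{X_t\}$, so $i(U \cap \{t=1\}, X_1) \ne 0$. In particular $U \cap \{t = 1\}$ is non-empty, furnishing a periodic orbit of $X = X_1$.

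The main obstacle I would expect is not the endpoint computation nor the final deduction, but rather establishing the homotopy invariance of the extended index for open and compact subsets of $S$. The classical Fuller invariance argument requires isolated components of the orbit space, whereas here the whole point is to rule out \emph{cancellation} between distinct connected components that may coalesce or interact during the homotopy, of the kind illustrated by the ``$(5-1) + (5-1) + \ldots$'' heuristic in the introduction. This should require a careful transversality or compact perturbation argument carried out on the parametrized orbit space $S$ and developed in an earlier section of the paper.
\end{pfs}
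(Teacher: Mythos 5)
Your argument is correct for this particular theorem, but it takes a genuinely different and more elementary route than the paper, and you appear not to realize how much less you actually need. You fix a simple Hopf orbit $y_0$, invoke the no-sky-catastrophe hypothesis to obtain a compact open $U \subset S(\{X_t\})$ through $y_0$, note that the slice $U_0 = U \cap (L_\beta M \times (0,\infty) \times \{0\})$ is a finite union of $\mathbb{CP}^k$-families of Hopf covers, and that all local indices share a common sign by \eqref{eq:conleyzenhnder} together with the parity computation underlying Lemma \ref{lemma:HopfinfiniteType}, so $i(U_0, H, \beta) \neq 0$. Since $U$ is compact and open, the slices $N_t = U \cap (L_\beta M \times (0,\infty) \times \{t\})$ are each compact open in $S(X_t,\beta)$ and $\bigcup_t N_t = U$ is open, which is exactly the hypothesis of Fuller's \emph{classical} invariance theorem as recalled in Section \ref{sec:Fuller}. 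That is already enough: $i(N_1, X_1, \beta) = i(N_0, H, \beta) \neq 0$, so $N_1$ is nonempty. No extended index is required.

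The paper, by contrast, proves the theorem as a corollary of Theorem \ref{thm:welldefined}, which is designed for an arbitrary admissible $X_0$ with $i(X_0,\beta) \neq 0$, and the proof of that theorem goes through perturbation systems, the finite/infinite-type dichotomy, the Fuller $k$-fold-cover correspondence, and a mod-$k$ cohomological argument. All of that is machinery to handle the situation where no single compact open piece of $S(X_0,\beta)$ need carry nonzero classical Fuller index even though the ``total'' index is nonzero in an extended sense. The Hopf field is special precisely because every compact open piece of $S(H,\beta)$ has index of a fixed sign, so your shortcut works. The closing paragraph of your proposal—where you anticipate the ``main obstacle'' to be establishing homotopy invariance for compact open subsets—is therefore misplaced: that invariance is not new, it is Fuller's original theorem, and the actual content of the paper's new machinery lives entirely in the non-compact, globally cancelling regime that your argument sidesteps. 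You should also be careful with wording: the quantity $i(U,\{X_t\})$ you write is really the classical Fuller index of the slices of a parametrized isolating set, not the paper's ``extended index'' $i(X,\beta) \in \mathbb{Q} \sqcup \{\pm\infty\}$, which is a property of the vector field itself, not of a compact piece.
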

Let us call a homotopy $\{X _{t} \}$ satisfying the conditions of the theorem
above \emph{partially admissible}. 
\begin{lemma} \label{lemmaExample} There exists a $\delta >0$ so that
whenever $X$ is $C ^{0} $ $\delta$-close to the Hopf vector field $H$, $X = X
_{1} $ for a partially
 admissible homotopy $\{X _{t} \}$, with $X _{0}= H$. And in particular by the above $X$
has periodic orbits.
\end{lemma}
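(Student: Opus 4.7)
The plan is to take the straight-line homotopy $X_t := (1-t)H + tX$ and verify that it is partially admissible for $\delta$ sufficiently small. Since $H$ has constant positive norm on $S^{2k+1}$, choosing $\delta < \min_{S^{2k+1}} \|H\|$ ensures each $X_t$ is non-vanishing and smooth, yielding a smooth homotopy of non-singular vector fields from $H = X_0$ to $X = X_1$. By Theorem \ref{conj:preliminary}, the existence of periodic orbits of $X$ follows once sky catastrophes are ruled out.

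The core analytic input is a period-clustering estimate. Let $T$ denote the primitive period of the Hopf flow. For any $N > 0$, a Gronwall comparison between the flow of $X_t$ and that of $H$ over a time interval of length at most $N$ produces $\delta_N > 0$, depending only on $H$ and $N$, such that whenever $\|X - H\|_{C^0} \leq \delta_N$, every $(o, p, t) \in S = S(\{X_t\})$ with $p \leq N$ has period lying in a unique interval $(mT - \eta, mT + \eta)$ with $\eta < T/3$ around some positive integer $m \leq \lceil N/T \rceil$, and $o$ is $C^0$-close to the $m$-fold cover of some Hopf fiber.

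Fix any $y = (o_y, p_y, 0) \in S \cap (LS^{2k+1} \times (0,\infty) \times \{0\})$. Since $o_y$ is a non-constant periodic orbit of $\frac{1}{p_y} H$, it is an $n$-fold cover of a Hopf fiber with $p_y = nT$. Setting $N := nT + T/2$ and $\delta \leq \delta_N$, define
\[
U_y := \{ (o, p, t) \in S : p < nT + T/2 \}.
\]
Then $U_y$ is open in $S$ by continuity of the period projection, contains $y$, and is bounded above by $nT + T/2$. For closedness, any limit point $(o, p, t) \in S$ satisfies $p \leq nT + T/2 < N$, so the clustering places $p$ strictly inside some $(mT - \eta, mT + \eta)$ with $m \leq n$, hence strictly less than $nT + T/2$, so the limit lies in $U_y$. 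This supplies the required open-closed bounded-above subset for each $y$.

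The main obstacle is that $\delta_N$ a priori shrinks as $N$ grows, so a single $\delta$ must be selected that handles every multiplicity $n$ simultaneously. This is resolved by exploiting that $H$ is Killing for the round metric on $S^{2k+1}$: the Poincar\'e return map of $X_t$ on any transversal to a Hopf fiber is $C^0$-close to the identity with a constant independent of the multiplicity, so the clustering estimate can be promoted to a statement uniform in $n$ by controlling iterates of the return map rather than running Gronwall over an increasingly long time interval. A sufficiently small uniform $\delta > 0$ then makes $U_y$ open-closed and bounded above for every $y$ at once, completing the proof.
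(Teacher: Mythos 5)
Your approach --- localizing the orbit spectrum of a $C^0$-nearby field near that of $H$ by comparing flows --- is essentially the same as the paper's, which proves a compactness lemma giving, for each isolated compact component $S_i$ of $S(H)$, a threshold $\delta_i$ such that any $\delta_i$-close field has its orbits in a fixed neighbourhood of $S_i$ trapped near $S_i$. You are right that this threshold degrades with the period (the Gronwall rate is of order $t\, e^{L t}\,\delta$), and you correctly single out the non-uniformity in the multiplicity $n$ as the crux.

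However, your final paragraph, where you claim to remove the $n$-dependence, is a gap. That the first-return map of $X_t$ to a Hopf transversal is $C^0$ $\delta$-close to the identity does not yield clustering uniform in $n$: the $n$-fold iterate of a map $\delta$-close to the identity can drift by $O(n\delta)$, so a multiplicity-$n$ closed orbit of $X_t$ has period controlled only to within $O(n\delta)$ of $2\pi n$, and your requirement $\eta < T/3$ fails once $n \gtrsim 1/\delta$. The Killing property of $H$ only says the $H$-flow is by isometries; it puts no constraint on iterates of the $X_t$-return map, since $X_t$ is not close to Killing in any sense that survives iteration. Consequently the closedness argument for $U_y$ is still conditional on $\delta \leq \delta_N$ with $N \approx nT$, which is exactly the dependence you set out to eliminate. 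To finish, one needs a genuinely different mechanism excluding escape through large-multiplicity orbits; be aware that the paper itself elides exactly this step (``Lemma \ref{lemmaExample} then readily follows. Let us leave out the details.''), so you cannot simply defer to it.
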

Thus Theorem \ref{conj:preliminary}  may be understood as an extensive
generalization of the theorem of Seifert giving existence of periodic orbits for
non-singular vector fields $C^0$ close to the Hopf vector field, on which the
Seifert conjecture was based.

It is interesting to
consider the 
contrapositive of the theorem above, and which may be understood as a new phenomenon and one concrete application of our theory.
\begin{corollary} \label{cor:obvious} Given any homotopy $\{X _{t} \}$ of smooth
non-singular vector fields from the
Hopf vector field to a vector field with no periodic orbits,  $\{X _{t} \}$ has a sky catastrophe.
\end{corollary}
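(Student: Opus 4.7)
The plan is to derive Corollary~\ref{cor:obvious} as a direct contrapositive restatement of Theorem~\ref{conj:preliminary}, so there is essentially no mathematical content beyond a logical inversion. I would argue by contradiction: suppose $\{X_t\}$ is a homotopy of smooth non-singular vector fields with $X_0 = H$ the Hopf vector field and $X_1$ having no periodic orbits, and assume toward contradiction that $\{X_t\}$ has no sky catastrophe in the sense of the definition preceding Theorem~\ref{conj:preliminary}.

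Under these assumptions, the homotopy $\{X_t\}$ satisfies exactly the hypotheses of Theorem~\ref{conj:preliminary}: the endpoints are smooth non-singular, $X_0$ is the Hopf vector field on $S^{2k+1}$, and no sky catastrophe is present. Applying the theorem yields that $X = X_1$ has a periodic orbit, directly contradicting the defining property of $X_1$. Hence the assumption that $\{X_t\}$ has no sky catastrophe must fail, proving the corollary.

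Since this is purely a contrapositive, there is no genuine obstacle — the entire content resides in Theorem~\ref{conj:preliminary} itself. The only care needed is to confirm that the hypotheses of the theorem match the data supplied by the corollary verbatim (smoothness, non-singularity of every $X_t$, and $X_0 = H$), which is immediate from the statement of the corollary. No additional estimates, index computations, or analysis of the orbit space $S(\{X_t\})$ are required at this stage.
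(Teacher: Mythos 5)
Your proposal matches the paper exactly: the paper explicitly introduces Corollary~\ref{cor:obvious} as ``the contrapositive of the theorem above'' and offers no further argument, precisely because the content is all in Theorem~\ref{conj:preliminary}. Your contradiction framing is just an equivalent way of phrasing the contrapositive, so there is nothing missing.
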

Note that by the main construction in Wilson~\cite{citeWilsonOnTheminimalsets}, the
Hopf vector field $H$ is homotopic through smooth non-singular vector fields to a vector
field with finitely many simple closed orbits. Combining this with the construction in \cite{citeKKuperbergSmoothCounterexample}
we find that there do exist  homotopies of $H$ through smooth non-singular vector
fields to a vector field
with no closed orbits. 

More general and extended forms of the above theorem are stated in Section
\ref{section:generalization}. 
To prove them we give a certain natural extension of the classical Fuller
index, with the latter giving  certain invariant rational counts of periodic orbits of a smooth vector
field in ``dynamically isolated compact sets''.
Our extension is $\mathbb{Q} \sqcup \{\pm \infty\}$ valued. One ingredient for this is a notion of perturbation system for a vector
field, which will allow us to consider weighted in terms of index and
multiplicity ``infinite sums'' of closed orbits of a vector field. For these
sums to have any meaning we impose certain ``positivity'' or ``negativity''
conditions. 
\begin{remark}
These kind of summations are used in ``positive topological quantum
field theories
'' where infinite sums which are normally meaningless with coefficients in a
ring like $\mathbb {C}$ are made meaningful by working with a complete
semi-ring in the sense of Samuel Eilenberg. But what we do here is of course much
more basic in principle.
\end{remark}

Can we use Theorem \ref{conj:preliminary} and its analogues for more general
manifolds in Section
\ref{section:generalization} to show existence of orbits? Let us assume some
minimal regularity on the homotopy $\{X _{t} \}$ so that connected components of
$S (\{X _{t} \})$ are open.
Then ideally we would like to have some a priori upper bounds for the
period on connected components coming from some geometry-topology of the
manifold and or the vector field. This then means that connected components of
$S (\{X _{t} \})$ would be open and compact, and we can apply our theorems to get existence
results.
Fuller himself in
\cite{citeFullerIndex} gives an example of such bounds for vector fields on the
2-torus, in fact his bounds are absolute for the whole $S$ (for a fixed homotopy
class of orbits) not just its
connected components. He also speculates that if one works with divergence free vector fields
one can do more to this effect. Below we examine the case of Reeb vector fields.
\subsection {Reeb vector fields and sky catastrophes}
\label{section:Reebadmissible}
Viterbo~\cite{citeViterboWeinstein} shows that Reeb vector fields for the standard contact
structure on $S ^{2k+1} $ always have periodic orbits. In
the case of $S ^{3} $ for an overtwisted contact structure
this is shown by Hofer
\cite{citeHoferWeinsteinSymplectizationInvetiones},
using pseudo-holomorphic curve techniques. 
In the context of the Seifert
conjecture, it is interesting  to understand 
 the most basic properties of qualitative-dynamical, or even just topological
 character,
 that Reeb vector fields posses, which makes them
different from general non-singular vector fields. Indeed as a first step, in
light of our results, we may
ask if a homotopy of Reeb vector fields $\{X _{t}
\}$ on a closed manifold is necessarily free of sky catastrophes.  The  following theorem puts
a very strong restriction on the kinds of sky catastrophes that can happen. 
It is likely, that if they exist, they must be pathological, and very hard to construct.
\begin{remark}
See however Kerman \cite[Theorem 1.19]{citeKerman2017}, where a kind of partial Reeb plug is constructed, which is missing the matching condition, see for instance \cite{citeKKuperbergSmoothCounterexample} for terminology of plugs, also see Kerman \cite{citeKermanHamiltonianSeifert}, Ginzburg and G\"urel \cite{citeGinzburgHamiltonianSeifert}, where plugs are utilized in Hamiltonian context.
If one had a plug with all conditions, then it is simple to construct a sky catastrophe. For we may deform such a plug through partial plugs satisfying all conditions except the trapping condition (condition 3 in \cite{citeKKuperbergSmoothCounterexample}) to a trivial plug. This deformation then readily gives a sky catastrophe corresponding to the trapped orbit.
Without matching, this argument does not obviously work.
\end{remark}

The proof of the following uses only elementary geometry of the
Reeb vector fields.  

%
\begin{theorem} \label{prop:AdmissibleReeb} Let $\{X _{t} \}$, $t \in [0,1]$ be a smooth homotopy through Reeb
vector fields on a contact manifold $M$.
   Let $S = S (\{X _{t} \})$ be defined as before, then there is no unbounded from above locally Lipschitz  continuous $p: [0,\infty)
\to S$ whose composition with the projection $\pi_3: LM
\times (0, \infty) \times [0,1] \to [0,1]$ has finite length. 


\end{theorem}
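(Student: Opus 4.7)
The plan is to convert the finite length hypothesis on $\pi_3 \circ p$ into a Gronwall bound on the period coordinate $P(\tau)$ by exploiting the defining identity \emph{action $=$ period} for Reeb orbits. Let $\alpha_t$ denote the contact form on $M$ so that $X_t = R_{\alpha_t}$, and write the locally Lipschitz path as $p(\tau) = (o_\tau, P(\tau), t(\tau))$. From the defining condition $\dot o_\tau = (1/P(\tau)) X_{t(\tau)}(o_\tau)$ together with $\alpha_t(R_{\alpha_t}) \equiv 1$, one obtains
\[
\int_{o_\tau} \alpha_{t(\tau)} \;=\; \frac{1}{P(\tau)},
\]
so bounding $P$ from above is the same as bounding this action away from zero from below.

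The core step is to differentiate this action a.e.\ in $\tau$. Smooth dependence of the Reeb flow on initial data, period, and parameter upgrades local Lipschitz continuity of $p$ to joint local Lipschitz continuity of $(\tau,u) \mapsto o_\tau(u)$, so Rademacher applies. A Cartan-type computation splits $d/d\tau$ into a ``fixed orbit, varying form'' contribution and a ``fixed form, varying orbit'' contribution, and the Reeb property annihilates the latter:
\[
\frac{d}{d\tau}\int_{o_\tau}\alpha_{t(\tau)} \;=\; t'(\tau)\int_{o_\tau}\dot\alpha_{t(\tau)},
\]
because the variational term reduces to $\int_0^1 d\alpha_{t(\tau)}(V_\tau, \dot o_\tau)\,du$ with $V_\tau = \partial_\tau o_\tau$, and $\dot o_\tau$ is collinear with $R_{\alpha_{t(\tau)}}$, so $\iota_{R_\alpha} d\alpha = 0$ kills it.

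Finally, assuming $M$ is compact and fixing an auxiliary Riemannian metric, set $K_1 = \sup_{t,x}\|R_{\alpha_t}(x)\|$ and $K_2 = \sup_t \|\dot\alpha_t\|_{C^0}$, both finite. The length of $o_\tau$ is at most $K_1/P(\tau)$, so the identity above yields $|d(1/P)/d\tau| \leq |t'(\tau)| K_1 K_2/P(\tau)$, equivalently
\[
\left|\frac{d}{d\tau}\log P(\tau)\right| \;\leq\; K_1 K_2 \,|t'(\tau)| \quad\text{a.e.}
\]
Integrating over $[0,T]$ yields $|\log P(T) - \log P(0)| \leq K_1 K_2 \cdot \mathrm{length}(\pi_3 \circ p|_{[0,T]})$, uniformly bounded by hypothesis, so $P$ is bounded and the unboundedness hypothesis is contradicted. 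The main technical obstacle is making the Cartan differentiation honest when $p$ is only locally Lipschitz (rather than smooth) in the topological space $S$; this is handled by combining Rademacher with smoothness of the Reeb flow, or by smoothing $p$ and passing to the limit in the integrated form of the inequality.
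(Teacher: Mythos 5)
Your proposal is correct and follows essentially the same route as the paper's proof: both express the $(0,\infty)$-coordinate via the Reeb action integral $\int_{o_\tau}\lambda_{t(\tau)}$, differentiate in $\tau$, use $\iota_{R_\lambda}d\lambda = 0$ to kill the orbit-variation contribution so that only the $\partial_t\lambda_t$ term survives, and then close a Gronwall estimate controlled by the length of $\pi_3\circ p$. The only cosmetic differences are that the paper normalizes $\lambda_t = f_t\lambda$ (harmless, since the contact structure is fixed) and phrases the Gronwall step as $|\rho_2'|\le K L\,\rho_2$ rather than your logarithmic-derivative bound $|d\log P/d\tau|\le K_1K_2|t'|$, which are equivalent.
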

The above rules out for example Fuller's sky catastrophe that appears in
\cite{citeFullerBlueSky}, and described in the beginning of our paper.
\begin{conjecture} Let $\{X _{t} \}$, $t \in [0,1]$ be a smooth homotopy through Reeb
vector fields on a compact contact manifold $M$. Then there is a $C ^{0} $ nearby smooth family $\{X' _{t} \}$, $t \in [0,1]$, $X' _{i}=X _{i}  $, $i=0,1$, such that $\{X' _{t} \}$ has no sky catastrophes.
\end{conjecture}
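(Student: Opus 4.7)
The plan is to $C^0$-approximate the given Reeb homotopy by a generic one with the same endpoints, for which the space $S = S(\{X_t\})$ has manageable bifurcation structure, and then to invoke Theorem~\ref{prop:AdmissibleReeb} to show that connected components meeting $\{t=0\}$ have bounded period. Writing $\{X_t\}$ as the Reeb vector fields of a path of contact forms $\{\alpha_t\}$, the first step is a Sard-Smale transversality argument applied to the parametric moduli space of closed Reeb orbits fibered over paths of contact forms, exhausted by period thresholds, producing $\{\alpha'_t\}$ with $\alpha'_i = \alpha_i$ for $i=0,1$, $C^0$-close to $\{\alpha_t\}$, for which every Reeb orbit of every $\alpha'_t$ is non-degenerate except at a discrete set of $t$-values in $(0,1)$ where the bifurcations are of standard Morse-Bott type (saddle-node or period-doubling).

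For such a regular family, the space $S' = S(\{X'_t\})$ is a locally finite $1$-dimensional CW complex, in particular locally connected, so its connected components agree with quasi-components; the no-sky-catastrophe condition then reduces to every connected component of $S'$ meeting $\{t=0\}$ being bounded in period. If some such component $C$ were unbounded in period, one could find a proper continuous injection $[0,\infty) \to C$ along which the period diverges, and parameterizing by arc length would yield a locally Lipschitz $p \colon [0,\infty) \to S'$ to which Theorem~\ref{prop:AdmissibleReeb} applies; the theorem would then force the $t$-projection $\pi_3 \circ p$ to have infinite total variation, i.e.\ $C$ must oscillate infinitely in the $t$-direction while the period escapes to infinity.

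The main obstacle, and the reason this remains only a conjecture, is ruling out such infinite $t$-oscillation even after a $C^0$-small perturbation. A plausible quantitative route is the following: since only finitely many bifurcations occur below each period threshold $P$ for a generic path, the truncation $C \cap \{p \leq P\}$ has $t$-variation bounded by some $V(P)$; by a further perturbation supported on the tails $\{p > P_k\}$ for an exhausting sequence $P_k \to \infty$ (which does not affect the already-analyzed low-period part of $S'$), one attempts to arrange that the total $t$-variation of each offending component remains finite while the period grows, thereby contradicting Theorem~\ref{prop:AdmissibleReeb} and forcing the component to close up or cap off into a compact piece. Executing this inductive surgery simultaneously for all countably many offending components, while remaining $C^0$-close and fixing the endpoints, is the heart of the problem and will likely require a quantitative strengthening of Theorem~\ref{prop:AdmissibleReeb} relating the rate of period growth to the $t$-variation in the Reeb setting, beyond the qualitative statement proved in the paper.
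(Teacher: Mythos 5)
This statement is a \emph{Conjecture} in the paper, and the paper offers no proof of it; your submission is accordingly an attack sketch rather than a proof, and you are admirably upfront about that. Your overall route is the natural first thing to try: make the family generic so that $S(\{X'_t\})$ becomes a tame one-dimensional bifurcation diagram, reduce no-sky-catastrophes to boundedness of period on connected components meeting $\{t=0\}$, and then hope to rule out an escaping component by invoking Theorem~\ref{prop:AdmissibleReeb}. The reduction in your second paragraph is correct: once $S'$ is locally connected, the open-closed bounded-from-above condition in the definition of sky catastrophe is equivalent to the ambient connected component being bounded in period. And your reading of Theorem~\ref{prop:AdmissibleReeb} is also correct: it says nothing about paths with infinite $t$-variation, so an escaping component must oscillate infinitely in $t$.

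But the step you flag as the ``heart of the problem'' is genuinely not fillable by the route you propose, for at least two concrete reasons. First, Theorem~\ref{prop:AdmissibleReeb} is quantitative only through the constant $K = \max_{t,M}|\partial f_t/\partial t|\cdot\max_{t,M} f_t$ appearing implicitly in its proof (Lemma~\ref{lemma:delta} gives the period bound $\exp(L\cdot K)$). The ``inductive surgery on tails'' you describe alters the contact forms $\alpha'_t$, and while the perturbation of the vector fields can be kept $C^0$-small, there is no a priori control keeping $K$ small through a countable sequence of such modifications; each surgery may strictly increase $K$, so any quantitative strengthening of Theorem~\ref{prop:AdmissibleReeb} you hope to apply at the end degrades as you perform the surgeries. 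Second, the premise that a perturbation ``supported on the tails $\{p > P_k\}$'' does not affect the low-period part of $S'$ conflates localization in period with localization in $M\times[0,1]$; contact-form perturbations are supported in space-time, not in period, and a high-period orbit traverses low-period regions of phase space, so a perturbation designed to cap one escaping component can create or destroy orbits of arbitrary period in already-analyzed regions. There is also a smaller but real gap in your first step: parametric Sard--Smale genericity for Reeb orbits holds only below a fixed period threshold $P$ for $t$ in a compact interval, so ``$S'$ is a locally finite 1-CW complex'' is valid only on period-truncated pieces; exhausting over $P_k\to\infty$ must be interleaved with the surgeries rather than done once upfront, which compounds the convergence difficulties above.

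In short: the strategy is sensible and identifies the right hard kernel (infinite $t$-oscillation), but it does not close the gap, and the proposed surgery scheme as written has structural problems that go beyond bookkeeping. The conjecture remains open.
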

Given this conjecture we may readily apply (general analogues of) Theorem \ref{conj:preliminary} to get applications to existence of Reeb orbits.
\section {Fuller index and its extension}
 \label{sec:Fuller}
The Fuller index is an analogue for orbits of the fixed point index,
but with a couple of new ingredients: we must account for the
symmetry groups of the orbits, 
and since the period is freely varying there is
an extra compactness issue to deal with. 
Let us briefly recall the definition following Fuller's original
paper \cite{citeFullerIndex}. All vector vector fields from now on, everywhere
in the paper, will be assumed to be
smooth and non-singular and manifolds smooth and  oriented (the last for
simplicity).

Let $X$ be a vector field  on $M$. Set
\begin{equation*}
S (X) = S (X, \beta) = 
   \{(o, p) \in L _{\beta} M \times (0, \infty) \,\left .  \right | \, \text{ $o: \mathbb{R}/\mathbb{Z} \to M $ is a
   periodic orbit of $\frac{1}{p} X $} \},
\end{equation*}
where $L _{\beta} M  $ denotes
the free homotopy class $\beta$ component of the free loop space.
Elements of $S (X)$ will be called orbits. There
is a natural $S ^{1}$ reparametrization action on $S (X)$, and elements of $S
(X)/S ^{1} $ will be called \emph{unparametrized orbits}, or just orbits. Slightly abusing notation we write $(o,p)$  for
the equivalence class of $(o,p)$. 
%
%
The multiplicity $m (o,p)$ of a periodic orbit is
the ratio $p/l$ for $l>0$ the least  period of $o$.
We want a kind of fixed point index which counts orbits
$(o,p)$ with certain weights - however in general to get
invariance we must
have period bounds. This is due to potential existence of sky catastrophes as
described in the introduction.

Let $N \subset S (X) $ be a compact open set (the open condition is the meaning
in this case of
``dynamically isolated'' from before).
Assume for simplicity that elements $(o,p) \in N$  are
isolated. (Otherwise we need to perturb.) Then
to such an $(N,X, \beta)$
Fuller associates an index: 
\begin{equation*}
   i (N,X, \beta) = \sum _{(o,p) \in N/ S ^{1}}  \frac{1}{m
   (o,p)} i (o,p),
\end{equation*}
 where $i (o,p)$ is the fixed point index of the time $p$ return
map of the flow of $X$ with respect to
a local surface of section in $M$ transverse to the image of $o$.
Fuller then shows that $i (N _{t} , X _{t}, \beta )$ is invariant for a deformation $\{X _{t}
\}$ of $X$ if $N _{t} $ is dynamically isolated for all $t$, that is if $\bigcup _{t} N _{t}  $ is 
open in $S (\{X _{t} \})$.

In the case where $X$ is
the $R ^{\lambda} $-Reeb vector field on a contact manifold $(C ^{2n+1} , \xi)$,
and if $(o,p)$ is
non-degenerate, we have: 
\begin{equation} \label{eq:conleyzenhnder}
i (o,p) = \sign \Det (\Id|
   _{\xi (x)}  - F _{p, *}
^{\lambda}| _{\xi (x)}   ) = (-1)^{CZ (o)-n},
\end{equation}
where $F _{p, *}
    ^{\lambda}$ is the differential at $x$ of the time $p$ flow map of $R ^{\lambda} $,
    and where $CZ ( o)$ is the Conley-Zehnder index, (which is a special
    kind of Maslov index) see
    \cite{citeRobbinSalamonTheMaslovindexforpaths.}.

\subsection{Extending Fuller index}
\label{sub:extendingfullerindex}
We assume from now on, everywhere in the paper, that $M$ is compact, smooth, and
oriented, although we sometimes reiterate for clarity.
We will now describe an extension of the Fuller index  allowing us to work with the entire
Fuller phase space $LM \times \mathbb{R} _{+} $. We define an  index $i
(X, \beta)$ that
depends only on $X, \beta$ but which is a priori defined and is invariant only for certain
special vector fields and 
homotopies of vector fields.  
In fact we have already given the essence of the
necessary condition  on 
the homotopy in the special case of Theorem \ref{conj:preliminary}. 
Let
\begin{equation*}
 S (\{X _{t} 
\}, \beta) = \{(o, p, t) \in L _{\beta} M \times (0, \infty) \times [0,1]
\,\left .  \right |
\, \text{ $o: \mathbb{R}/\mathbb{Z} \to M $ is a
periodic orbit of $\frac{1}{p} X _{t} $} \}.
\end{equation*}
\begin{definition} 
For a smooth homotopy $\{X _{t} \}$ of smooth non-singular vector fields on $M$, we say
that it is \textbf{\emph{partially admissible in free homotopy class $\beta$}},
if every element of
  $$ S (\{X _{t} \}, \beta) \cap \left (L _{\beta} M  \times (0, \infty) \times
   \{0\} \right )$$
is contained in a compact open subset of $S (\{X _{t} \}, \beta)$.
 We say that $\{X _{t} \}$ is \textbf{\emph{admissible in free homotopy class $\beta$}} if
every element of
  $$S (\{X _{t} \}, \beta) \cap \left (L _{\beta} M  \times (0, \infty) \times
   \partial [0,1] \right )$$
is contained in a compact open subset of $S (\{X _{t} \}, \beta)$.
\end{definition}

For $X$ a vector field, we set 
\begin{equation}
\begin{aligned}
S   (X )  & = \{(o, p) \in L M   \, \left .  \right | \, \text{ $o: \mathbb{R}/\mathbb{Z} \to M $ is a
periodic orbit of $\frac{1}{p} X $}  \}. \\
   S   (X,\beta )  & = \{(o, p) \in L _{\beta}M   \, \left .  \right | \, \text{ $o: \mathbb{R}/\mathbb{Z} \to M $ is a
periodic orbit of $\frac{1}{p} X $}  \}. \\
   S   (X,a, \beta )  & = \{(o, p) \in S (X, \beta)  \, \left .  \right | \, p
   \leq a \}. \\
\end{aligned} 
\end{equation}
\begin{definition} \label{def:perturbedsystem}
Suppose that $S   (X, \beta )$ has open connected components.
And suppose that we have a collection of 
vector fields $\{X ^{a} \} $, for each $a>0$,
satisfying the following:
\begin{itemize}
   \item 
  $S   (X ^{a},a, \beta ) $ consists of isolated orbits for each $a$.
\item $$S   (X ^{a},a, \beta ) = S   (X ^{b},a, \beta ),  $$ (equality of
subsets of $L _{\beta} M \times \mathbb{R} _{+} $) if $b>a$, and
the index and multiplicity of the orbits corresponding to the identified elements of these sets
coincide.
\item
There is a prescribed homotopy $\{X ^{a}_t \}$ of each $X ^{a} $ 
to $X$, 
called \textbf{\emph{structure
homotopy}}, 
with the property that for every $$y \in S  (\{X ^{a}_t \})
\cap \left ( L _{\beta} M   \times (0,a] \times \partial [0,1] \right)$$
there is an open
compact subset $\mathcal{C} _{y} \ni y  $ of $S  (\{X ^{a}_t \})$
   which is 
       \textbf{\emph{non-branching}} which means that 
$$\mathcal{C} _{y}  \cap \left (L _{\beta} M  \times (0, \infty) \times
\{i\} \right ),$$ $i=0,1$ are connected.
     \item $$S   (\{X ^{a} _{t}  \}, \beta ) \cap \left (L _{\beta} M \times (0,a] \times
        [0,1] \right) = S   (\{X ^{b} _{t}  \},
        \beta ) \cap \left ( L _{\beta} M \times (0,a] \times [0,1] \right),  $$ (equality of
         subsets of $L _{\beta} M \times \mathbb{R} _{+}  \times [0,1])$ if $b>a$ is
         sufficiently large.
\end{itemize}
We will then say that $\{X ^{a}  \}  $ is a 
\textbf{\emph{perturbation system}} for $X$ in the class $\beta$, (keeping track
   of structure homotopies and of $\beta$ implicitly).
\end{definition}


We shall see shortly that a Morse-Bott Reeb vector field always admits a perturbation
system. 
\begin{definition} \label{def:infinitetype}  Suppose that  
$X$ admits a perturbation system $\{X ^{a} \}$ so that there exists an $E = E
   (\{X ^{a} \})$ with
the property that $$S  (X
^{a}, a, \beta  ) = S  (X ^{E}, a, \beta   )$$ for all $a > E$, where this as
before is equality of subsets of $L _{\beta} M \times \mathbb{R} _{+} $, and the
index and multiplicity of identified elements are also identified.
Then we say that $X$ is \textbf{\emph{finite type}} and set:
   \begin{equation*}
      i (X, \beta) = \sum _{(o,p) \in S   (X ^{E}, \beta )/S ^{1} }  \frac{1}{m
   (o,p)} i (o,p).
\end{equation*}

\end {definition}
  \begin {definition} 
   Otherwise, suppose that $X$ admits a perturbation system $\{X ^{a}   \}
    $ and 
   there is an $E = E (\{X ^{a} \}) >0$ so
   that the index $i (o,p)$ of elements $(o,p) \in S(X ^{a}, \beta) $ with $E \leq
   p \leq a $ is positive, respectively negative for every $a >E$,
and s.t. 
\begin{equation*}
\lim _{a \mapsto \infty} \sum _{(o,p) \in S (X,a,\beta) /S ^{1}}  \frac{1}{m
   (o,p)} i (o,p) =  \infty, \text{ respectively } \lim _{a \mapsto \infty} \sum _{o \in S (X,a,\beta) /S ^{1}}  \frac{1}{m
   (o,p)} i (o,p) = - \infty.
\end{equation*}
   Then
   we say that $X$ is \textbf{\emph{positive infinite type}}, respectively \textbf{\emph{negative
   infinite type}} and set $i
   (X, \beta)= \infty, $ respectively
   $ i (X, \beta) = - \infty $.
   We say it is
   \textbf{\emph{infinite type}} if it is one or the other.  
\end{definition}   
\begin{definition} We say that $X$ is
   \textbf{\emph{definite}} type if it is infinite type or finite type.
\end{definition}
With the above definitions $$i (X, \beta) \in \mathbb{Q} \sqcup {\infty} \sqcup {- \infty}, $$
when it is defined.
\begin{remark}
  It is an elementary exercise that the condition that 
   \begin{equation*}
   \lim _{a \mapsto \infty} \sum _{o \in S   (X ^{a}, a, \beta )/S ^{1}}  \frac{1}{m
   (o)} i (o) =  \infty, \text{ respectively } \lim _{a \mapsto \infty} \sum _{o \in S   (X ^{a}, a, \beta )/S ^{1}}  \frac{1}{m
   (o)} i (o) = - \infty,
   \end{equation*} actually follows if the other conditions are satisfied,
   so is only stated for emphasis.
\end{remark}

\begin{definition} A vector field $X$ is \textbf{\emph{admissible}} if 
   it admits a
perturbation system, and if it is definite type.
\end{definition}
\begin{remark}
One may be tempted to extend the finite type to include the case when   
$$\lim _{a \mapsto \infty} \sum _{o \in S   (X ^{a}, a, \beta )/S ^{1}}  \frac{1}{m
   (o)} i (o), $$ exists and the associated series is absolutely convergent. This
   definitely works for our arguments later, but it is an elementary exercise  to show that
   there are no such $X$ unless it is of the previous finite type.
\end{remark}
\subsubsection {Perturbation systems for Morse-Bott Reeb vector fields}
\begin{definition} \label{def:MorseBott}
A contact form $\lambda$ on $M$, and its associated flow $R ^{\lambda} $ are called \emph{Morse-Bott} if the $\lambda$
action spectrum $\sigma (\lambda)$ - that is the space of
   critical values of $o \mapsto \int _{S ^{1} } o ^{*} \lambda  $,
is discreet and if for every $a \in
\sigma (\lambda)$, the space $$N _{a}: = \{x \in M | \, F _{a} (x)
=x \},   $$ $F _{a} $ the time $a$ flow map for $R ^{\lambda} $ - is a closed smooth
manifold such that rank $d \lambda |  _{N _{a} } $ is locally constant
and $T _{x} N _{a} = \ker (d F _{a} - I ) _{x}   $.
\end{definition}

\begin{proposition} \label{prop:MorseBott1}
  Let  $\lambda$ be a contact form of Morse-Bott type, on a closed contact
  manifold $C$. Then the corresponding Reeb vector field $R ^{\lambda} $  admits a
  perturbation system $\{X  ^{a }  \}$, for every class $\beta$, with each $X ^{a} $ Reeb so
  that all the structure homotopies are through Reeb vector fields. 
\end{proposition}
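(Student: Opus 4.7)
The plan is to apply the Bourgeois-style Morse-Bott perturbation of the contact form and realize each $X^a$ as the Reeb vector field of a suitable $C^\infty$-small perturbation of $\lambda$. Fix a free homotopy class $\beta$. First I would enumerate the action spectrum $\sigma(\lambda) = \{c_1 < c_2 < \cdots\}$, which is discrete by Definition \ref{def:MorseBott}. By hypothesis each $N_{c_i}$ is a closed smooth manifold with $T_x N_{c_i} = \ker(dF_{c_i}-I)_x$ and with $\rank d\lambda|_{N_{c_i}}$ locally constant, so the Reeb flow generates a locally free $S^1$ action on $N_{c_i}$ and the quotients $N_{c_i}/S^1$ are compact orbifolds. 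On each quotient pick a Morse function, and pull it back to an $S^1$-invariant Morse function $f_i$ on $N_{c_i}$; extend the collection $\{f_i\}$ to a smooth function $F:M \to \R$ with $F|_{N_{c_i}} = f_i$ near each $N_{c_i}$, using cutoff functions supported in small tubular neighborhoods.

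Next, for each $a > 0$ let $V_a$ be an open neighborhood of $\bigcup_{c_i \le a+1} N_{c_i}$ and let $\rho_a: M \to [0,1]$ be a bump function that equals $1$ on $\bigcup_{c_i \le a} N_{c_i}$ and is supported in $V_a$, chosen so that $\rho_a \le \rho_b$ for $a < b$ and $\rho_a \equiv \rho_b$ on a neighborhood of $\bigcup_{c_i \le a}N_{c_i}$. Choose $\epsilon_a > 0$ small enough that
\[
\lambda^a := (1+\epsilon_a \rho_a F)\lambda
\]
is a contact form, and small enough that the Bourgeois perturbation theorem applies up to action $a$: the periodic orbits of $X^a := R^{\lambda^a}$ of action $\le a$ are all non-degenerate, and are in bijection with the critical points of $F$ on the relevant $N_{c_i}$ with $c_i \le a$, together with their iterates, with Fuller index computed from the Morse index of $f_i$ and the Conley-Zehnder data of the unperturbed family via \eqref{eq:conleyzenhnder}. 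Because the cutoffs $\rho_a$ and $\rho_b$ agree in a neighborhood of every orbit of action $\le a$, and the Reeb vector field near such an orbit depends only on the contact form there, a further choice $\epsilon_b \le \epsilon_a$ (shrinking as necessary) makes $X^a$ and $X^b$ coincide on that neighborhood, giving the required equality $S(X^a,a,\beta) = S(X^b,a,\beta)$ of subsets of $L_\beta M\times \R_+$.

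For the structure homotopy I would take the linear family $\lambda^a_t := (1 + t\epsilon_a \rho_a F)\lambda$, $t \in [0,1]$, which is a path of contact forms for $\epsilon_a$ small (the contact condition is $C^1$-open) and thus a path of Reeb vector fields. For each critical point of $F$ on some $N_{c_i}$ with $c_i \le a$, the corresponding perturbed orbit at $t=1$ is connected to the entire Morse-Bott family $N_{c_i}$ at $t=0$ by a connected component $\mathcal{C}_y$ of $S(\{X^a_t\}, \beta)$; compactness of $\mathcal{C}_y$ follows from the action bound $a$ together with the Morse-Bott nature of the degeneration, while its openness follows from the implicit function theorem at each non-degenerate orbit (for $t > 0$ small one uses the standard Morse-Bott splitting lemma). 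The non-branching condition is then automatic: the $t=1$ slice of $\mathcal{C}_y$ is the single isolated orbit over the critical point, and the $t=0$ slice is the entire connected Morse-Bott family $N_{c_i}$, both connected. Finally, the consistency property $S(\{X^a_t\},\beta)\cap(L_\beta M \times (0,a]\times[0,1]) = S(\{X^b_t\},\beta)\cap(L_\beta M \times (0,a]\times[0,1])$ for $b>a$ large follows from the same localization argument used for the $X^a$ themselves.

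The main obstacle, and the only real analytic content, is the Bourgeois-type theorem identifying the low-action orbits of $R^{\lambda^a}$ with critical points of $F$ on the $N_{c_i}$; its proof uses the implicit function theorem applied to a return map on a local slice transverse to each $S^1$-orbit in $N_{c_i}$, controlling the error uniformly in the action bound via the factor $\epsilon_a$. Once this is in hand, the secondary difficulty is simultaneously controlling the system $\{X^a\}$ so that actual equality of orbit sets (as subsets of $L_\beta M \times \R_+$) holds for $b>a$; this is why I impose the locality of the cutoffs $\rho_a$ and the monotonicity $\epsilon_b \le \epsilon_a$, which make the perturbation genuinely local near each Morse-Bott family and therefore coherent across different truncation levels.
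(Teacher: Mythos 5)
Your approach is essentially the one in the paper --- Bourgeois's perturbation of $\lambda$ by a function supported near the orbit manifolds and descending to a Morse function on the orbit spaces, with the linear homotopy of contact forms serving as the structure homotopy.

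There is, however, a concrete error in your verification of the axiom $S(X^a,a,\beta)=S(X^b,a,\beta)$. You assert that ``a further choice $\epsilon_b\le\epsilon_a$ (shrinking as necessary) makes $X^a$ and $X^b$ coincide on that neighborhood.'' If $\epsilon_b<\epsilon_a$ this is simply false: even though $\rho_a=\rho_b$ near every orbit of action $\le a$, the contact forms $(1+\epsilon_a\rho_a F)\lambda$ and $(1+\epsilon_b\rho_b F)\lambda$ differ wherever $\rho_a F\ne 0$, so the Reeb vector fields differ there; in particular the periods of the non-degenerate closed orbits near a critical point $p$ on the level-$c_i$ family are $c_i(1+\epsilon F(p))$, which depends on $\epsilon$. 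Honest equality of orbit sets as subsets of $L_\beta M\times\mathbb{R}_+$ therefore requires freezing the scaling once and for all near each low-action family, but Bourgeois's non-degeneracy threshold genuinely shrinks with the action cutoff, so a single scaling constant cannot serve all $b$. The paper's own proof faces the same tension (it remarks only parenthetically that $\mu(E_{n'})$ ``needs to in general be smaller than $\mu(E_n)$'') and does not resolve it, but it also does not make the false claim that shrinking the scale preserves coincidence of the vector fields; you should not take this as license and should instead design the perturbation so that its restriction near each $N_{c_i}$ is genuinely independent of the truncation level. A secondary concern is your non-branching verification: you say the isolated-orbit endpoint slice of $\mathcal{C}_y$ is ``the single isolated orbit over the critical point,'' but the connected component of $y$ in $S(\{X^a_t\},\beta)$ also contains the entire Morse-Bott family $N_{c_i}$ at the unperturbed endpoint, and hence the arcs emanating from every other critical point of $f$ on that level; so the isolated-orbit endpoint slice is the full finite set of critical-point orbits, which is disconnected whenever $\dim\mathcal{O}_{c_i}>0$. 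The paper does not spell out the non-branching verification either, so this is a shared soft spot, but the reason you give for it is incorrect as stated.
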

The above very likely extends to more general ``Morse-Bott type'' and beyond vector fields of non
Reeb type,  however one must take care to give the right definitions.


\begin{proof}
Let $$ O _{ \leq E}= O _{ \leq E} (R ^{\lambda} ) \simeq S (R ^{\lambda}, E) $$ denote the set
     of points $x \in C$, s.t. $F ^{\lambda}
    _{p} (x)=x $, for $F ^{\lambda} _{p}   $  the time $p \leq E$ flow map for $R
    ^{\lambda} $.  
Given an $a$ take an $E >a$ s.t. the set $O _{\leq E} = O _{\leq E} (R ^{\lambda} )  $
    is a union of closed manifolds (of varying dimension), call such an $E$ \emph{appropriate}. 
Let
   $\mathcal{O}
   _{\leq E} $ be the natural $S ^{1} $-quotient of $O _{\leq E  } $. By
   \cite [Section 2.2]{citeFredericBourgeois} we
   may find a smooth function $f _{E } $ on $C$
   with support in a normal neighborhood of ${O} _{\leq E
   } $, with $Df _{E } (R ^{\lambda} )  =0$ on $O _{\leq E  } $
   descending to a Morse function on the union of closed orbifolds $\mathcal{O}
   _{\leq E  } $. 
   
Let $\lambda _{E, \mu} = (1 + \mu f _{E} )
   \lambda $. By \cite [Section 2.2]{citeFredericBourgeois} we may choose $\mu
   _{0}  > 0$ so that elements of
   $\mathcal{O} _{\leq E}  (R ^{\lambda _{E, \mu}   }) $ are non-degenerate and
   correspond to critical points of $f _{E  } $, for $0 < \mu \leq \mu (E) $.
   Let $\{E _{n} \} $ be an increasing sequence of appropriate
   levels as above. Since the action spectrum of $\lambda$
   is discreet by
   the Morse-Bott assumption,  we may take $\{f _{E _{n} } \} $ so that $f _{E _{n'}
   }| _{E _{n} }  $ coincides with $f _{E _{n} } $ if $E _{n'} > E _{n}  $. 
   (Note however that the cutoff value $\mu (E _{n'})$ needs to in general be
   smaller then $\mu (E _{n} )$.)
   Given this, 
   we set $X  ^{a} = R ^{\lambda _{E _{n}, \mu (E _{n}) }}  $, for any $E _{n}>a $.
   For the structure homotopies we take the obvious homotopies induced by the
   homotopies $$t \mapsto (1+ (1-t) \mu (E _{n} ) f _{E _{n} }  ) \lambda, \quad t \in
   [0,1],$$ of the contact forms.
\end {proof}
\begin{lemma} \label{lemma:HopfinfiniteType} The Hopf vector field $H$ on $S
^{2k+1} $ is infinite type.
\end{lemma}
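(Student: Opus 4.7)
The plan is to apply Proposition \ref{prop:MorseBott1} to produce a perturbation system for the Hopf flow and then to verify that the associated Fuller sums diverge. The standard contact form $\lambda_0$ on $S^{2k+1}$, whose Reeb vector field is $H$, is Morse--Bott in the sense of Definition \ref{def:MorseBott}: all simple orbits share a common period $p_0$, the action spectrum is $p_0\mathbb{Z}_+$, and at each level $np_0$ the critical manifold is all of $S^{2k+1}$, with $S^1$-quotient $\mathbb{CP}^k$. Proposition \ref{prop:MorseBott1} then supplies a perturbation system $\{X^a\}$ in which each $X^a$ is the Reeb vector field of a contact form $(1+\mu f_E)\lambda_0$, where $f_E$ descends to a Morse function $f$ on $\mathbb{CP}^k$. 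Since $\pi_1(S^{2k+1})=0$ for $k\ge 1$, only one free homotopy class $\beta$ needs to be considered.

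Next I would enumerate the perturbed orbits and compute their Fuller indices. Choosing $f$ to be a perfect Morse function on $\mathbb{CP}^k$, it has exactly $k+1$ critical points, all of even Morse index $0,2,\dots,2k$, and the non-degenerate periodic orbits of $X^a$ of period at most $a$ are indexed by pairs $(n,p)$ with $np_0\le a$ and $p\in\mathrm{Crit}(f)$; the associated orbit $\gamma^n_p$ has multiplicity $n$. By \eqref{eq:conleyzenhnder} its Fuller index is $(-1)^{CZ(\gamma^n_p)-k}$. The standard Morse--Bott formula for perturbations of the Hopf Reeb flow (cf.\ Bourgeois) writes $CZ(\gamma^n_p)$ as a sum of a Robbin--Salamon contribution from the $n$-fold iteration of the linearized Hopf rotation $e^{it}I_k$ on the contact distribution (which is even in $n$ since it arises from integer multiples of full $2\pi$ rotations in each $\mathbb{C}$-factor) together with $\mathrm{ind}_f(p)$, itself even, up to a constant depending only on $k$. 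Hence the parity of $CZ(\gamma^n_p)$ is independent of both $n$ and $p$, so $i(\gamma^n_p)=\sigma$ for some fixed $\sigma\in\{\pm 1\}$ and every orbit.

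Summing over orbits gives
\begin{equation*}
\sum_{(o,p)\in S(X^a,a,\beta)/S^1}\frac{1}{m(o,p)}\,i(o,p)\;=\;\sigma(k+1)\sum_{n=1}^{\lfloor a/p_0\rfloor}\frac{1}{n},
\end{equation*}
which tends to $\sigma\cdot\infty$ as $a\to\infty$ by divergence of the harmonic series. Thus $H$ is of positive infinite type if $\sigma=+1$ and of negative infinite type if $\sigma=-1$, and in either case of infinite type in the sense of Definition \ref{def:infinitetype}. The main obstacle is the parity claim in the middle step; it is really a bookkeeping exercise, but Conley--Zehnder conventions in the degenerate Morse--Bott setting are notoriously delicate, so the identification of the constant shift and a consistent check that all relevant summands are even must be done carefully.
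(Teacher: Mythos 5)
Your proposal follows essentially the same route as the paper: obtain the perturbation system from Proposition \ref{prop:MorseBott1} using a perfect Morse function on $\mathbb{CP}^k$, invoke Bourgeois's Morse--Bott formula for the Conley--Zehnder index of the perturbed orbits, and observe that the Robbin--Salamon contribution of the iterated Hopf rotation, the Morse indices, and the dimension shift all have fixed parity, so that by \eqref{eq:conleyzenhnder} every index $i(o,p)$ has the same sign. Your extra step of explicitly exhibiting the divergent harmonic-type sum is harmless but not strictly required, as the paper's remark preceding the lemma notes that divergence follows automatically from the other conditions; the parity bookkeeping you flag as delicate is exactly the point the paper settles by citing \cite[Lemma 2.4]{citeFredericBourgeois} together with the evenness of $\mu_{CZ}(\mathcal{O}_{2\pi i})$, $\dim \mathcal{O}_{2\pi i}=2k$, and the Morse indices of a perfect Morse function on $\mathbb{CP}^k$.
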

\begin{proof}  Pick a perfect Morse function $f$ on $\mathbb{CP} ^{k} $. 
This induces a perfect Morse function $f$ on $\mathcal{O} _{ \leq 2 \pi n}  $, 
upon identifying
   $\mathcal{O} _{ \leq 2 \pi n} $  with the $n$-fold disjoint union of copies of
   $\mathbb{CP} ^{k} $ (forgetting the totally non-effective orbifold
   structure).
Use the
   construction 
   above, to obtain  a perturbation system
   $\{H  ^{2 n \pi}  \}$ for $H$,  $n \in
   \mathbb{Z}_+$,   so that the space
   $\mathcal{O} ^{pert} _{\leq 2 \pi n} = \mathcal{O}  _{\leq 2 \pi n} (H ^{2 n \pi})  $,  be identified with critical points
   of $f $ on the space
   $\mathcal{O} _{ \leq 2 \pi n}  $.     Given a critical point $p$ of $f$ on
   the component $\mathcal{O} _{2 \pi i} \simeq \mathbb{CP} ^{k} 
    $, $0 < i \leq  2 \pi n $, of $\mathcal{O} _{\leq 2 \pi n} $, let $o _{p} $
   denote the corresponding orbit in $\mathcal{O} ^{pert} _{2 \pi i}  $.
   By \cite
   [Lemma 2.4]{citeFredericBourgeois}, 
   \begin{equation*} \label{eq:}
  \mu _{CZ} (o_p)   = \mu _{CZ} (\mathcal{O} _{2 \pi i} ) -  \frac{1}{2} \dim
  _{\mathbb{R}} 
  \mathcal{O} _{2 \pi i} + morseindex _{f} (p),
   \end{equation*}
where $\mu _{CZ} (\mathcal{O} _{2 \pi i} ) $ is the generalized Maslov index for
an element of $\mathcal{O} _{2 \pi i}$, see for instance 
\cite [Section 5.2.2]{citeFredericBourgeois}.
 Let us slightly elaborate as our
reader may not be familiar with this. We pick a
representative for a class of an orbit $o$ in $\mathcal{O} _{2 \pi i}$, pick a
bounding disk for the orbit and using this choice trivialize the contact
distribution along $o$. Given this trivialization the $R^{\lambda} $ Reeb flow induces a path of
symplectic matrices to which we apply the generalized Maslov index. 
Since $\mu _{CZ} (\mathcal{O} _{2 \pi i} )$  has even parity for all $k$, it
follows that $\mu _{CZ} (o_p)$
   has the same parity for all $n$, and so by \eqref{eq:conleyzenhnder} $H$ is
   infinite type. 
\end{proof}

\section {Extensions of theorem \ref{conj:preliminary} and their proofs}

\label{section:generalization}
\begin{theorem} \label{thm:welldefined} 
Suppose we have an admissible vector field $X _{0}  $, with $i (X
_{0}, \beta ) \neq 0$  on a closed, oriented
manifold $M$, which is joined to $X _{1} $  by a partially admissible 
homotopy $\{X _{t} \}$, 
 then  $X _{1} $ has periodic orbits. 
\end{theorem}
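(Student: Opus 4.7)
The plan is to argue by contradiction. Assume $X_1$ has no periodic orbits in class $\beta$; I will show this forces $\Sigma_a = 0$ for all large $a$ (with $\Sigma_a$ defined below), contradicting the nonvanishing of $i(X_0,\beta)$. The engine throughout is the classical Fuller invariance of the index on compact open subsets of the parametrized orbit space of a smooth one-parameter family of vector fields.

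Fix a perturbation system $\{X^a\}$ for $X_0$ in class $\beta$, with structure homotopies $\{X^a_t\}_{t\in[0,1]}$, say with $X^a_0=X^a$ and $X^a_1=X_0$. For each $a$, form
\begin{equation*}
\Sigma_a := \sum_{(o,p)\in S(X^a,a,\beta)/S^1}\frac{i(o,p)}{m(o,p)}.
\end{equation*}
By the finite-versus-infinite type dichotomy, either $\Sigma_a=i(X_0,\beta)$ stably in $a$ (finite type) or $|\Sigma_a|\to\infty$ as $a\to\infty$ (infinite type). In either case $\Sigma_a\neq 0$ for all sufficiently large $a$, so it suffices to exhibit one large $a$ with $\Sigma_a=0$.

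Fix such an $a$. For each isolated orbit $y\in S(X^a,a,\beta)$, the non-branching axiom of the perturbation system furnishes a compact open $\mathcal{C}_y\subseteq S(\{X^a_t\})$ whose $t=0$ and $t=1$ slices are connected; by shrinking, using isolatedness of $y$, I arrange that $\mathcal{C}_y\cap(L_\beta M\times(0,\infty)\times\{0\})=\{y\}$, and write $K_y:=\mathcal{C}_y\cap(L_\beta M\times(0,\infty)\times\{1\})$, a connected compact subset of $S(X_0,\beta)$. Set $V:=\bigcup_y\mathcal{C}_y$, a compact open subset of $S(\{X^a_t\})$ with $t=0$ slice $S(X^a,a,\beta)$ and $t=1$ slice $K:=\bigcup_y K_y$. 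Classical Fuller invariance applied to $V$ gives $\Sigma_a=i(K,X_0)$. Partial admissibility of $\{X_t\}$ then provides, for each $z\in K$, a compact open $D_z\subseteq S(\{X_t\})$ containing $z$; compactness of $K$ extracts a finite subcover $\{D_{z_i}\}_{i=1}^n$. After refining so that each $D_{z_i}$ has $t=0$ slice inside $K$, set $\tilde D:=\bigcup_i D_{z_i}$: this is compact open in $S(\{X_t\})$, its $t=0$ slice equals $K$, and its $t=1$ slice lies in $S(X_1,\beta)=\emptyset$. Concatenating $V$ and $\tilde D$ along the common $K$-slice produces a compact open subset $W$ of the concatenated homotopy $\{Y_s\}_{s\in[0,1]}$ from $X^a$ to $X_1$, with $s=0$ slice $S(X^a,a,\beta)$ and $s=1$ slice empty. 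Fuller invariance applied to $W$ forces $\Sigma_a=0$.

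The central technical obstacle is the refinement ensuring $\tilde D\cap(L_\beta M\times(0,\infty)\times\{0\})=K$ exactly; this equality is what permits $V$ and $\tilde D$ to glue to a genuinely open subset of $S(\{Y_s\})$, since otherwise points in $\tilde D\setminus K$ at $t=0$ would lack open neighborhoods on the $X^a$ side of the concatenation. Since $K$ is both open and compact in $S(X_0,\beta)$, the subspace topology furnishes an open $U\subseteq S(\{X_t\})$ with $U\cap(\cdots\times\{0\})=K$, and I would use flow-box coordinates in a neighborhood of the $t=0$ slice of $S(\{X_t\})$ to refine each $D_{z_i}$ to a compact open subset contained in $U$ while still covering $K$. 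Granted this refinement, plus the analogous shrinkability of the $\mathcal{C}_y$, the remaining ingredients reduce to classical Fuller invariance applied in turn to $V$, $\tilde D$ and $W$, together with routine bookkeeping of the finite sum $\Sigma_a$.
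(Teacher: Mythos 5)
Your overall strategy is the same as the paper's: concatenate the structure homotopy $\{X^a_t\}$ (from $X^a$ to $X_0$) with the given homotopy $\{X_t\}$, locate a compact open subset of the concatenated orbit space whose $s=0$ slice is $S(X^a,a,\beta)$ and whose $s=1$ slice is empty, and invoke invariance of the Fuller index to force $\Sigma_a=0$, contradicting definiteness of $X_0$. This is exactly the architecture of Lemma \ref{lemma:partiallyadmissible} combined with the proof of Theorem \ref{thm:welldefined}. Two remarks on presentation: you treat classical Fuller invariance on a compact open set as a black box, while the paper re-derives it from scratch via the $Ful_k$ correspondence, forms dual to the diagonal, and a $\bmod\, k$ congruence --- since Fuller's own invariance theorem already handles contractible orbits through precisely that mechanism, black-boxing it is legitimate, so this is a difference of exposition rather than substance.

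The genuine gap is in the step you yourself flag as ``the central technical obstacle'': arranging $\widetilde D\cap\bigl(L_\beta M\times(0,\infty)\times\{0\}\bigr)=K$ by \emph{shrinking} the sets $D_{z_i}$. This cannot be done in general. Fix some $z\in K$ and the compact open $D_z\subseteq S(\{X_t\})$ that partial admissibility gives you. If $C$ is the connected component of $D_z$ (or of $S(\{X_t\})$) containing $z$, then any compact open subset of $S(\{X_t\})$ containing $z$ must contain all of $C$ (a compact open set is clopen, so it either contains or misses each connected component). Now the $t=0$ slice $C\cap(\cdots\times\{0\})$ is generally \emph{not} connected and may consist of several components of $S(X_0,\beta)$, some inside $K$ and some outside. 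In that case there is simply no compact open set whose $t=0$ slice equals $K$ and which still contains $z$; flow-box coordinates near $t=0$ do not help because the obstruction is global in $t$, not local near the $t=0$ slice. The open set $U$ with $U\cap(\cdots\times\{0\})=K$ exists by subspace topology, but $D_z\cap U$ need not contain any compact open subset of $S(\{X_t\})$ covering $K\cap D_z$.

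The paper's Lemma \ref{lemma:partiallyadmissible} resolves this by going the other way: instead of shrinking $\widetilde D$, it \emph{enlarges} $V$. Writing $M_y$ for the $t=0$ slice of the relevant $\mathcal{C}_{y'}$, the extra pieces $W^j_y=M_y\setminus\bigl(\pi(K_y)\times\{0\}\bigr)$ (which are finitely many compact open components of $S(X_0,\beta)$, using the axiom that those components are open) are absorbed by choosing a larger level $b'$ in the perturbation system and attaching non-branching compact open sets $\mathcal{C}^j_y\subseteq S(\{X^{b'}_t\})$ whose $t=1$ slice is exactly $\pi(W^j_y)$; the fourth perturbation-system axiom guarantees consistency of this passage from level $a$ to level $b'$. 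That absorption step is precisely what is missing from your refinement, and without it the glued $W$ fails to be open along the gluing slice, so Fuller invariance cannot be applied. If you replace the shrinking of $\widetilde D$ by this enlargement of $V$, your argument becomes complete and agrees with the paper's.
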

Theorem \ref{conj:preliminary} clearly follows by the above and by Lemma
\ref{lemma:HopfinfiniteType}. What follows is a more precise result.
\begin{theorem} \label{thm:welldefinedadmissible} If $M$ is closed, oriented  and $X
_{0}, X _{1}  $ and $\{X _{t} \}$ are admissible then $i (X_0, \beta) = i (X _{1},
\beta)$.
\end{theorem}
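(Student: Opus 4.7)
The plan is to reduce the statement to classical Fuller index invariance applied to a suitable concatenated homotopy. For each $a > 0$ I form the concatenation
$$\{Y^a_t\}_{t \in [0,1]}\colon X_0^a \leadsto X_0 \leadsto X_1 \leadsto X_1^a,$$
where the first and third segments are the structure homotopies supplied by the perturbation systems $\{X_0^a\}$ and (the reverse of) $\{X_1^a\}$, and the middle segment is $\{X_t\}$ itself. Write $\mathcal{S}^a := S(\{Y^a_t\}, \beta)$.

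\textbf{Step 1 (assembly of compact open subsets).} Given an isolated orbit $y = (o,p) \in S(X_0^a, a, \beta)$, the non-branching property of the structure homotopy for $\{X_0^a\}_t$ encloses $y$ in a compact open piece $\mathcal{C}^{\mathrm{I}}_y$ of $\mathcal{S}^a|_{[0,1/3]}$ whose $t=1/3$ slice $K^{\mathrm{I}}_y$ is a connected compact subset of $S(X_0,\beta)$. Admissibility of $\{X_t\}$ provides, at each point of $K^{\mathrm{I}}_y$, a compact open neighborhood in $S(\{X_t\},\beta)$; by compactness a finite union gives a compact open piece $\mathcal{C}^{\mathrm{II}}_y$ of $\mathcal{S}^a|_{[1/3,2/3]}$ with $t=2/3$ slice $K^{\mathrm{II}}_y$ a compact subset of $S(X_1,\beta)$. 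Applying the non-branching clause of the structure homotopy for $\{X_1^a\}_t$ to a finite cover of $K^{\mathrm{II}}_y$ produces $\mathcal{C}^{\mathrm{III}}_y \subset \mathcal{S}^a|_{[2/3,1]}$. Concatenating yields a compact open $\mathcal{K}_y \subset \mathcal{S}^a$ with $y$ lying in its $t=0$ slice.

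\textbf{Step 2 (classical invariance on each piece).} Since $\mathcal{K}_y$ is a compact open subset of the orbit space of the smooth homotopy $\{Y^a_t\}$, Fuller's classical invariance theorem gives
$$i\bigl(\mathcal{K}_y \cap \{t=0\},\, X_0^a, \beta\bigr) = i\bigl(\mathcal{K}_y \cap \{t=1\},\, X_1^a, \beta\bigr).$$

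\textbf{Step 3 (cover, disjointify, sum).} Cover the finitely many isolated orbits in $S(X_0^a, a, \beta)$ by such pieces $\mathcal{K}_{y_i}$, then replace them with the (still compact open) disjoint family of connected components of their union. Running the symmetric construction beginning from isolated orbits of $S(X_1^a, a, \beta)$ and taking the common refinement shows the resulting disjoint family covers both endpoint fibers for $a$ large enough to absorb any period drift along $\{Y^a_t\}$. Summing over components yields
$$\sum_{(o,p) \in S(X_0^a, a, \beta)/S^1} \frac{1}{m(o,p)} i(o,p) = \sum_{(o,p) \in S(X_1^a, a, \beta)/S^1} \frac{1}{m(o,p)} i(o,p).$$

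\textbf{Step 4 (passage to the index).} If $X_0, X_1$ are both finite type, choosing $a > \max(E(X_0), E(X_1))$ reads off $i(X_0,\beta) = i(X_1,\beta)$ directly. In the positive (resp.\ negative) infinite type case the equality of partial sums for all large $a$ forces the right-hand side to diverge to $+\infty$ (resp.\ $-\infty$); since $X_1$ is admissible and hence of definite type, this pins down $i(X_1,\beta) = i(X_0,\beta) \in \{\pm\infty\}$.

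\textbf{Main obstacle.} The delicate point is Step~3: ensuring the family $\{\mathcal{K}_{y_i}\}$ picks up no spurious and drops no required orbits on the $X_1^a$ side. The decisive input is that a finite union of compact open subsets of $\mathcal{S}^a$ is again compact open, so any discrepancy between the construction started from $X_0^a$ and the symmetric one started from $X_1^a$ is itself confined to a compact open piece of $\mathcal{S}^a$, whose endpoint contributions must agree by Step~2. A secondary technical point is period drift: an orbit of $X_0^a$ of period $\leq a$ may move along $\{Y^a_t\}$ to an $X_1^a$-orbit of period $> a$. This is handled by replacing $a$ with a sufficiently large $b$ and invoking the stabilization clauses $S(X_i^a, a, \beta) = S(X_i^b, a, \beta)$ built into the definition of a perturbation system.
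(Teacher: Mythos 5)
Your Steps~1--2 track the paper's construction of the concatenated homotopy and the assembly of compact open pieces via the non-branching clause, and invoking Fuller's classical invariance on each piece is legitimate. The real problem is Step~3.

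The claimed equality
\[
\sum_{(o,p) \in S(X_0^a, a, \beta)/S^1} \frac{1}{m(o,p)} i(o,p) = \sum_{(o,p) \in S(X_1^a, a, \beta)/S^1} \frac{1}{m(o,p)} i(o,p)
\]
is not what comes out of the construction. Once you assemble the compact open pieces $\mathcal{K}_{y}$, their $t=0$ slice will generally contain, in addition to $S(X_0^a,a,\beta)$, spurious orbits of $X_0^b$ with period in $(a,b]$; the same happens at $t=1$. The stabilization clause $S(X_i^a,a,\beta)=S(X_i^b,a,\beta)$ controls only the period-$\le a$ orbits and says nothing about orbits with period in $(a,b]$, so these extra contributions (the paper's sets $Q_a$, $Q'_a$) do not go away. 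Your remark that any discrepancy is ``confined to a compact open piece, whose endpoint contributions must agree'' does not kill those contributions --- it merely says they satisfy their own invariance relation, which still leaves you with unknown extra terms on both sides. Consequently Step~4, which consumes the exact numerical equality, is also unsupported.

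What the paper does instead --- and what you are missing entirely --- is the Fuller correspondence: pass to the $k$-fold space $\textbf{M}$ (for $k$ a prime exceeding the multiplicity bound), use the homomorphism $\mu\colon H_1(\textbf{M}) \to \mathbb{Z}_k$ sending every lifted orbit class to the generator, and compare Poincar\'e duals of the restrictions $\omega^r_0$, $\omega^r_1$ of a form dual to the diagonal. This converts the homology-class identity into a \emph{congruence} mod $k$ (equation \eqref{eq:equalitymodk}) rather than an exact equality. The extra terms $Q_a$, $Q'_a$ are then handled not by showing they vanish, but by letting $a$ (and hence $k$) grow and using the sign constraints from the finite/infinite type hypotheses to derive a contradiction from the congruence. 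That mod-$k$ mechanism is the essential device; without it, the ``spurious orbit'' problem in Step~3 is unresolved, so the proof as written has a genuine gap.
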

\subsection {Fuller correspondence} 
  We need a beautiful construction of Fuller \cite{citeFullerIndex}, which converts contractible
orbits of $X$ into non-contractible orbits in an associated space, for an
associated vector field, so that the correspondence between the periodic orbits
is particularly suitable. 
Let $\textbf{M}'$ be the subset of the $k$-fold product $ M \times \ldots \times
M
$, for $k$ a prime, consisting of points all of whose coordinates $(x_1, \ldots , x _{k} )$
are distinct. Let $\textbf{M}$ be the quotient of $\textbf{M}'$ by the
permutation action of $\mathbb{Z} _{k} $, generated by $P (x_1, x_2, \ldots
x_k)= (x_2,  \ldots ,  x _{k}, x _{1}  )$. As this is a free action the
projection 
map $\textbf{M}' \to \textbf{M} $ is a regular $k$-sheeted covering.
And so we have a homomorphism $$\mu: \pi _{1} (\textbf{M}) \to \mathbb{Z}_k, $$
which extends to $H _{1} (\textbf{M}) $ since $\mathbb{Z} _{k} $ is abelian. 

A vector field $X$ on $M$ determines a vector field $\textbf{X}$ on $\textbf{M}$
given by 
\begin{equation} \label{eqbfX}
\textbf{X} (\textbf{x}) = [(X (x_1), \ldots, X (x_k) ],
\end{equation} 
$\textbf{x} = [(x_1,  \ldots, x_k )]$. It is easy to see $\textbf{X}$ is
complete when $X$ is complete, which holds in our case by compactness of $M$.
Now
for an orbit $(o,p) \in S (X, \beta)$ with multiplicity $m < k$ define a
   multiplicity $m$ orbit $ (\textbf{o}, \frac{p}{k})  \in S (\textbf{X})$.

 $$\textbf{o} (t)  = [o (t/k), 
   o (t/k + \frac{1}{k}), \ldots,  o (t/k + (k-1)/k)] \in L\textbf{M}.
$$ 
   Clearly $\mu ([\textbf{o}]) = \textbf{1}$,  ($\textbf{1}$ corresponding to the  generator $P $
of the permutation group $\mathbb{Z} _{k} $, with $P$ as above)
and $i (\textbf{o}) = i (o)$ by Fuller \cite [Lemma 4.5]{citeFullerIndex}.
If we work over all classes $\beta$ then it is easy to see that  
\begin {equation}
\begin{split}
   &   Ful _{k} : (o, p) \mapsto (\textbf{o}, \frac{p}{k}), 
\end{split}
\end {equation}
is a bijection from the set of all (unparametrized) period $p$ orbits of $X$
with multiplicity less than $k$ to the
set of all (unparametrized) period $\frac{p}{k}$ orbits of $\textbf{X}$ with
multiplicity less than $k$.
\begin{lemma} \label{lemmaMultiplicity}
Let $\{X _{t} \}$ be usual and   let $\textbf{m} (\{X _{t} \},a)$ denote the least upper bound for the set of
   multiplicities of elements of $$S   (\{X  _{t}  \}, \beta ) \cap \left (L _{\beta} M \times (0,a] \times
        [0,1] \right)  $$ then
   $\textbf{m} (\{X _{t} \},a) < \infty$.
\end{lemma}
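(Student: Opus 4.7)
The plan is to reduce the bound on multiplicity to a uniform lower bound on the least period of a simple closed orbit of any $X_t$. Concretely, I would first establish the following auxiliary claim: there exists $\ell_0 > 0$ such that for every $t \in [0,1]$, every non-constant periodic orbit of $X_t$ has least period (measured in the $X_t$-flow parametrization) bounded below by $\ell_0$.

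To prove this auxiliary claim I would argue by contradiction. Suppose not; then there are sequences $t_n \in [0,1]$, $x_n \in M$, and $s_n > 0$ with $s_n \to 0$ and $\phi^{X_{t_n}}_{s_n}(x_n) = x_n$, where $\phi^{X_{t}}_s$ denotes the time-$s$ flow of $X_t$. By compactness of $M \times [0,1]$, after passing to a subsequence $x_n \to x$ and $t_n \to t$. Working in a coordinate chart near $x$, Taylor-expand:
\begin{equation*}
0 = \phi^{X_{t_n}}_{s_n}(x_n) - x_n = s_n X_{t_n}(x_n) + O(s_n^2),
\end{equation*}
so $X_{t_n}(x_n) = O(s_n) \to 0$. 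Joint continuity of $(x,t) \mapsto X_t(x)$ then gives $X_t(x) = 0$, contradicting the assumption that every $X_t$ is non-singular. (This uses $s_n \to 0$ crucially, so that higher-order corrections are dominated.)

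Given the auxiliary claim, the lemma follows almost immediately. Suppose for contradiction that $\textbf{m}(\{X_t\},a) = \infty$. Then there is a sequence $(o_n, p_n, t_n) \in S(\{X_t\}, \beta)$ with $p_n \leq a$ and multiplicities $m_n \to \infty$. By the definition of multiplicity recalled earlier, the underlying simple orbit of $X_{t_n}$ through any point of $o_n$ has least $X_{t_n}$-flow period $l_n = p_n/m_n$. But $l_n \geq \ell_0$ by the auxiliary claim, so
\begin{equation*}
m_n = \frac{p_n}{l_n} \leq \frac{a}{\ell_0},
\end{equation*}
contradicting $m_n \to \infty$. I do not anticipate a real obstacle; the only subtlety is carrying the family parameter $t$ through the flow-box argument, which is handled by using joint continuity in $(x,t)$ and compactness of $M \times [0,1]$.
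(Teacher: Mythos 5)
Your proof is correct and takes essentially the same approach as the paper: both reduce the multiplicity bound to the observation that unbounded multiplicity with bounded total period forces simple-orbit periods to tend to zero, and both use compactness of $M\times[0,1]$ together with non-singularity of the family to rule this out. Your version spells out the flow-box/Taylor-expansion step and isolates the uniform lower bound $\ell_0$ on least periods as a clean auxiliary claim, whereas the paper gives the same argument in a terser sketch referring back to Fuller's Lemma~4.2.
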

\begin{proof} This is a version of \cite [Lemma 4.2]{citeFullerIndex}. The proof is
   as follows. Since $$S   (\{X  _{t}  \}, \beta ) \cap \left (L _{\beta} M \times (0,a] \times
        [0,1] \right) $$ is compact as it is identified under
   the embedding $emb: S   (\{X  _{t}  \}, \beta ) \to M \times (0, \infty) \times
   [0,1] )$, $(o, p, t) \mapsto (o (0), p, t) $ with a closed bounded subset of a finite dimensional manifold,
   we would otherwise have a convergent sequence $\{(o _{k}, p _{k})\}$ in $S   (\{X  _{t}  \}, \beta )$
   with $\{t _{k} \}$ also convergent, and
   so that $p _{k}$ converges to 0. But this contradicts the assumption that $X
   _{t} $ are non-singular.
\end{proof}
We set 
\begin{equation} \label{eqFulS}
S (\textbf{X}, a, \beta) = Ful _{k} (S (X, a, \beta)),
\end{equation}
for $k> m (X,a)$.
%

\subsection {Preliminaries on admissible homotopies}
\begin{definition} \label{def:admissible}
  Let $\{X _{t} \}$ be a smooth homotopy of non-singular vector fields.  
  For $b >a >0$ we say that $\{X _{t} \}$
  is \textbf{\emph{partially}} $a,b$-\textbf{\emph{admissible}}, respectively
   $a,b$-\textbf{\emph{admissible}} (in class $\beta$)  if 
   for each $$y \in \left(S = S (\{X _{t} \}, \beta) \right) \cap \left(L
   _{\beta}  M \times (0, a) \times \{0\} \right), $$
       there is a compact open subset $\mathcal{C} _{y} \ni y $ of $S$ contained in  
     $M \times (0,b) \times
  [0,1]$.
%
Respectively,
if for each $$y \in S \cap \left(L _{\beta}  M \times (0, a) \times \partial [0,1]
   \right), $$
there is a compact open subset $\mathcal{C} _{y} \ni y $ of $S$ contained in  
$M \times (0,b) \times
[0,1]$.
 \end {definition}
\begin{lemma} \label{lemma:partiallyadmissible} Suppose that $\{X _{t} \}$ is
partially admissible,  then for every $a$  
there is a $b>a$ so that $\{\widetilde{X}  ^{b}_t   \} =  \{X _{t}
 \} \cdot \{X ^{b} _ t
 \} $
 is partially $a,b$-admissible, where $\{X _{t}
 \} \cdot \{X ^{b} _ t
 \} $ is the (reparametrized to have $t$ domain $[0,1]$) concatenation of the homotopies $\{X _{t} \}, \{X
 ^{b} _{t}  \}$, and where  $\{X  ^{b} _ t  \}$ is the structure
 homotopy from 
 $X  ^{b  }  $ to $X _{0} $.
\end{lemma}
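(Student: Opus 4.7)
The plan is to produce, for each periodic orbit $y$ of $X^b$ with period less than $a$, a compact open neighborhood of $y$ in $S(\{\widetilde{X}^b_t\}, \beta)$ with bounded period, by combining the non-branching compact open family supplied by the third bullet of Definition \ref{def:perturbedsystem} (applied to the structure homotopy portion of $\{\widetilde{X}^b_t\}$) with a compact open family supplied by partial admissibility of $\{X_t\}$ (applied to the $\{X_t\}$ portion). I parametrize the concatenation so that $X^b$ sits at the $t=0$ slice, $X_1$ at the $t=1$ slice, and $X_0$ at the junction.

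First I would fix $a > 0$ and choose an auxiliary $b_0 > a$. By the first two bullets of Definition \ref{def:perturbedsystem}, the set $F$ of orbits of $X^{b_0}$ with period less than $a$ is finite: its elements are isolated in $S(X^{b_0}, \beta)$, bounded above in period by $a$, and bounded below in period by a positive constant (compactness of $M$ plus non-singularity, as in Lemma \ref{lemmaMultiplicity}). For each $y \in F$, the third bullet produces a non-branching compact open subset $\mathcal{C}_y \subset S(\{X^{b_0}_t\}, \beta)$ containing $y$; its $X_0$-endpoint $K_y := \mathcal{C}_y \cap (L_\beta M \times (0,\infty) \times \{1\})$ is a compact subset of $S(X_0, \beta)$. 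By partial admissibility of $\{X_t\}$ every point of $K_y$ has a compact open neighborhood in $S(\{X_t\}, \beta)$, and by compactness of $K_y$ finitely many such neighborhoods cover $K_y$, their union $\mathcal{D}_y$ being a compact open subset of $S(\{X_t\}, \beta)$ containing $K_y$. Since $S(\{\widetilde{X}^{b_0}_t\}, \beta)$ is naturally a pushout of $S(\{X^{b_0}_t\}, \beta)$ and $S(\{X_t\}, \beta)$ glued along the $X_0$-slice, the set $\mathcal{E}_y := \mathcal{C}_y \cup \mathcal{D}_y$ is a compact open subset of $S(\{\widetilde{X}^{b_0}_t\}, \beta)$ containing $y$.

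Each $\mathcal{E}_y$, being compact, has a finite period bound $B_y$; set $B := \max_{y \in F} B_y$. The main obstacle is reconciling the structure homotopy $\{X^{b_0}_t\}$ used in the construction with the structure homotopy $\{X^b_t\}$ that must appear in the output, since changing $b$ a priori changes the structure homotopy. To bridge this I would choose $b$ larger than both $B$ and the stability threshold supplied by the fourth bullet of Definition \ref{def:perturbedsystem} applied at period level $B$: that bullet identifies $S(\{X^{b_0}_t\}, \beta) \cap (L_\beta M \times (0,B] \times [0,1])$ with $S(\{X^b_t\}, \beta) \cap (L_\beta M \times (0,B] \times [0,1])$, so each $\mathcal{E}_y$ is simultaneously a compact open subset of $S(\{\widetilde{X}^b_t\}, \beta)$, contained in $M \times (0, B] \times [0,1] \subset M \times (0, b) \times [0,1]$. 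This is precisely the content of partial $a,b$-admissibility of $\{\widetilde{X}^b_t\}$.
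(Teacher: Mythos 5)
Your overall strategy mirrors the paper's: build a compact open set around each relevant orbit by gluing a non-branching family from the structure homotopy to a compact open family supplied by partial admissibility, then bound periods and pass to a large $b$ via the last axiom of Definition \ref{def:perturbedsystem}. However, there is a genuine gap at the gluing step.

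The set $\mathcal{E}_y = \mathcal{C}_y \cup \mathcal{D}_y$ is in general \emph{not} open in $S(\{\widetilde{X}^{b_0}_t\},\beta)$. The slice $\mathcal{D}_y \cap \bigl(L_\beta M \times (0,\infty) \times \{0\}\bigr)$ is a compact open subset of $S(X_0,\beta)$ that \emph{contains} $K_y$, but partial admissibility gives you no control forcing it to \emph{equal} $K_y$. Whenever it is strictly larger, a point $q$ in the excess lies in $\mathcal{E}_y$ at the junction slice, yet the only portion of $\mathcal{E}_y$ living in the structure-homotopy half is $\mathcal{C}_y$, and $\mathcal{C}_y$ meets the junction slice exactly in $K_y$ (by non-branching and connectedness, as the paper notes). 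So $q$ has no neighborhood inside $\mathcal{E}_y$ on the $\{X^{b_0}_t\}$ side, and $\mathcal{E}_y$ fails to be open there. Your claim that the pushout structure makes $\mathcal{C}_y \cup \mathcal{D}_y$ open is the unjustified step; a union of an open set from each half along a shared slice is open in the glued space only if the two pieces meet the slice in the same set.

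The paper closes exactly this hole: it writes the excess $M_y \setminus (\pi(K_y)\times\{0\})$ as a finite union of compact open connected components $W^j_y$ (here the hypothesis that $S(X_0,\beta)$ has open connected components, from Definition \ref{def:perturbedsystem}, is essential), and for each $W^j_y$ attaches a non-branching compact open set $\mathcal{C}^j_y \subset S(\{X^{b'}_t\})$ whose intersection with the junction slice equals $\pi(W^j_y)\times\{1\}$ exactly. Only after adjoining these caps does the union become open and compact. Your proposal omits these $\mathcal{C}^j_y$ caps, so as written it does not establish the lemma. (Your invocation of the fourth axiom at the end is also stated loosely — it relates $\{X^{a}_t\}$ and $\{X^{b}_t\}$ only for periods $\le a$, not periods up to an arbitrary $B$ — but that is a secondary issue relative to the openness gap.)
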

\begin{proof}
More explicitly
\begin{equation}  
\begin{aligned} 
    \{\widetilde{X}  ^{b}_t   \} & =  \{X ^{b} _{2t}  \} \text{ for } t \in [0,1/2] \\
    \{\widetilde{X}  ^{b}_t   \} & =  \{X _{2(t - 1/2)} \} \text{ for } t \in [1/2,1].
\end{aligned}
\end{equation}   
 Let $y \in S (\{X ^{a} _{t}  \})  \cap \left(L
   _{\beta}  M \times (0, a) \times \{0\} \right) $.
Let $\mathcal{C}' _{y} $ be a non-branching open compact subset of
 $S (\{X ^{a}_{t}  \})$ containing $y$.
Let $$y' \in K _{y} =  \mathcal{C}' _{y}  \cap \left(L _{\beta}M  \times (0, \infty) \times
\{1\} \right),$$ with $K _{y} $ by assumptions connected and since it is an open and
closed subset of $$S (\{X ^{a} _{t}  \})  \cap \left(L _{\beta}M  \times (0, \infty) \times
\{1\} \right),$$ $\pi (K _{y} )$ coincides with one of the connected components of $S (X
_0) $, 
for $$\pi: L _{\beta}M \times (0, \infty)
 \times [0,1] \to L _{\beta}M \times (0, \infty)$$ the projection.

Let $\mathcal{C} _{y'} $ be a compact open subset of
 $S (\{X_{t}  \})$ containing $\pi (y') \times \{0\} \in L _{\beta}M  \times (0, \infty)
 \times [0,1]$, $\mathcal{C} _{y'} $
 exists by partial admissibility
 assumption on $\{X _{t} \}$.
Let $$ M _{y} = \mathcal{C} _{y'} \cap \left( L _{\beta}M \times (0, \infty) \times
\{0\} \right)
 $$ then $M_{y} $ must contain $\pi(K _{y}) \times \{0\} $ as these sets are 
   open and closed in $S (\{X _{t} \}) \cap \left(L _{\beta}M  \times (0, \infty) \times
\{0\} \right)$ and $\pi(K _{y}) \times \{0\} $ is
   connected.
And $M _{y} 
 - \pi (K _{y}) \times \{0\}   $ is a finite union (possibly empty) of compact
 open connected components
 $\{W ^{j} 
 _{y} \}$, by the assumption that connected components of $S (X)$ are open, and
 since $M _{y} $ is compact.

Let $$S _{a} =  \bigcup _{y} \mathcal{C} _{y'}$$ for $y, y'$ as above. 
Then since there are only finitely many such $y$ $S _{a} $ is compact and so
is contained in  $L _{\beta} M \times (0, b') \times
 [0,1]$, for some $b' > a$ and sufficiently large as in the last axiom for a
 perturbation system.

For each $W ^{j} _{y}  $ as above let $\mathcal{C} ^{j} _{y}   $ be
a non-branching open compact subset of $S (\{X ^{b'} _{t}  \} )$ intersecting $\pi (W ^{j}
 _{y}) \times \{1\}  $, and hence so that $$\mathcal{C} ^{j} _{y} \cap \left (L _{\beta}
 M \times (0, \infty) \times
 \{1\} \right) = \pi (W ^{j}
 _{y}) \times \{1\}.  $$ This equality again follows by $\mathcal{C} ^{j} _{y} \cap \left(L _{\beta}
 M \times (0, \infty) \times
 \{1\} \right)$ and
 $\pi (W ^{j}
 _{y}) \times \{1\}$ being open closed and connected subsets of 
$S (\{X ^{b'}_{t}  \}) \cap \left (L _{\beta}
 M \times (0, \infty) \times
 \{1\} \right)$.

Let
\begin{equation*}
   T _{y} =   (\bigcup _{j}  \mathcal{C} ^{j} _{y}) \cup  \mathcal{C}' _{y} \cup
   S _{a},
\end{equation*}
where this union is taken  in 
\begin{equation} \label{eqUnion}
   \mathcal{U} =  \mathcal{U} _{-}   \sqcup \mathcal{U} _{+}/\sim,
\end{equation}
where $\mathcal{U} _{\pm} $ are two names for $L _{\beta}M \times (0, \infty)
   \times [0,1]$ 
and the equivalence relation the identification map of $M \times (0, \infty)
 \times \{1\}$ in the first component with  $M \times (0, \infty)
 \times \{0\}$ in the second component. And here $(\bigcup _{j}  \mathcal{C} ^{j}
 _{y}) \cup  \mathcal{C}' _{y}$ is understood as being a subset of
 $\mathcal{U}_{-} $
 and $S _{a}$ of $\mathcal{U} _{+} $. 
 
 
Let $$Q=[0,1] _{-}  \sqcup [0,1] _{+} /
\sim,$$ be the quotient space, where $[0,1] _{\pm} $ are two names for $[0,1]$,
and $\sim$ 
identifying $1 \in [0,1] _{-}  $  with $0 \in [0,1] _{+} $.
Let \begin{equation*}
\phi: Q \to [0,1],
\end{equation*} 
be the ``linear'' (linear, if one naturally identifies  $Q$ with $[0,2]$)
homeomorphism with $\phi (0)=0$, for
 $0 \in [0,1] _{-} $, $\phi (1) = 1/2$ for $1 \in [0,1] _{-} $,  $\phi (0)
 = 1/2$, for $0 \in [0,1] _+$, and $\phi (1) =1$, for $1 \in [0,1] _{+} $.
Then by the above discussion $T _{y} $ is a compact subset of \eqref{eqUnion}, 
and $C _{y} = \widetilde{\phi} (T _{y})   $
is a compact and open subset of
$ S (\{ \widetilde{X} ^{b} _{t}
\})$,  containing $y$, where $$\widetilde{\phi}: \mathcal{U} \to L _{\beta}M  \times (0, \infty)
 \times [0,1],$$ is induced by   $\phi$.
Again since there are only finitely many such $y$ \begin{equation*}
   \bigsqcup _{y} C _{y}  
\end{equation*}
is contained in $L _{\beta}M  \times (0, b) \times
 [0,1]$, for some $b$ sufficiently large.
 So our assertion follows.  %
\end{proof}
The analogue of Lemma \ref{lemma:partiallyadmissible} in the admissible case is the following:
\begin{lemma} Suppose that $X _{0}, X _{1}  $ and $\{X _{t} \}$ are admissible,
  then for every $a$  
there is a $b>a$ so that $\{\widetilde{X}  ^{b}_t   \} = \{ {X} ^{b}
_{1,t}   \} ^{-1} \cdot \{X _{t}
 \} \cdot \{X ^{b} _ {0,t}
 \} $
 is $a,b$-admissible,  where  $\{X  ^{b} _ {i,t}  \}$ are the structure
 homotopies from 
 $X _{i}   ^{b}  $ to $X _{i} $.
\end{lemma}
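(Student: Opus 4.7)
The plan is to mirror the argument of Lemma \ref{lemma:partiallyadmissible}, but now apply its gluing construction at \emph{both} endpoints of $[0,1]$. Recall that $\{\widetilde{X}^b_t\}$ traverses in succession the structure homotopy $\{X^b_{0,t}\}$ (from $X^b_0$ at $t=0$ to $X_0$), then $\{X_t\}$, and finally the reverse of $\{X^b_{1,t}\}$ (from $X_1$ to $X^b_1$ at $t=1$). One treats $y$ at $t=0$ and $y$ at $t=1$ symmetrically; I describe only the former.

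First I would pick, using the non-branching axiom of the perturbation system $\{X^b_{0,t}\}$ at $y \in S(\{X^b_{0,t}\}) \cap (L_\beta M \times (0,a) \times \{0\})$, a non-branching open compact subset $\mathcal{C}'_y \ni y$; its $t=1$ slice is a single connected component $K_y \subset S(X_0)$. Next, invoking admissibility of $\{X_t\}$ (not merely partial admissibility) at $\pi(K_y) \times \{0\}$, obtain an open compact subset $\mathcal{C}_{y'}$ of $S(\{X_t\})$ containing $\pi(K_y) \times \{0\}$ and bounded above in period. Let $\{W^j_{y,0}\}_j$ denote the components of the $t=0$ slice of $\mathcal{C}_{y'}$ other than $\pi(K_y) \times \{0\}$, and let $\{M^k_{y,1}\}_k$ denote the components of the $t=1$ slice of $\mathcal{C}_{y'}$. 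For each $W^j_{y,0}$ apply the non-branching axiom for $\{X^b_{0,t}\}$ to get a non-branching open compact $\mathcal{C}^j_{y,0} \subset S(\{X^b_{0,t}\})$ with $t=1$ slice $\pi(W^j_{y,0}) \times \{1\}$; symmetrically, for each $M^k_{y,1}$ apply the non-branching axiom for $\{X^b_{1,t}\}$ to get $\mathcal{C}^k_{y,1} \subset S(\{X^b_{1,t}\})$ with $t=0$ slice $\pi(M^k_{y,1}) \times \{0\}$.

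Then I would assemble
\[
T_y \;=\; \mathcal{C}'_y \,\cup\, \bigcup_j \mathcal{C}^j_{y,0} \,\cup\, \mathcal{C}_{y'} \,\cup\, \bigcup_k \mathcal{C}^k_{y,1}
\]
inside the threefold identification space
\[
\mathcal{U} \;=\; \mathcal{U}_0 \sqcup \mathcal{U}_1 \sqcup \mathcal{U}_2 \,/\! \sim,
\]
where each $\mathcal{U}_i$ is a copy of $L_\beta M \times (0,\infty) \times [0,1]$ and $\sim$ glues the $t=1$ slice of $\mathcal{U}_i$ to the $t=0$ slice of $\mathcal{U}_{i+1}$ by the identity on $L_\beta M \times (0,\infty)$. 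A piecewise linear homeomorphism $\widetilde{\phi}\colon \mathcal{U} \to L_\beta M \times (0,\infty) \times [0,1]$ sending the two gluings to $1/3$ and $2/3$ pushes $T_y$ to a compact open subset of $S(\{\widetilde{X}^b_t\})$ containing $y$; exactly as in Lemma \ref{lemma:partiallyadmissible}, openness at the gluing seams follows because $\mathcal{C}_{y'}$ contains each $\pi(W^j_{y,0}) \times \{0\}$ and $\pi(M^k_{y,1}) \times \{1\}$ while the non-branching pieces contain the matching slices on the outer side.

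Finally, since $S \cap (L_\beta M \times (0,a) \times \partial[0,1])$ is compact under the embedding $emb$ (cf.\ Lemma \ref{lemmaMultiplicity}), only finitely many $y$ need be considered, and the union of the finitely many sets $\widetilde{\phi}(T_y)$ is contained in $L_\beta M \times (0,b) \times [0,1]$ for some $b > a$ large enough to absorb the last axiom of perturbation systems. The main obstacle is exactly the bookkeeping of the ``extra branches'' that appear at each gluing seam: one must ensure that a \emph{single} application of admissibility of $\{X_t\}$ in the middle third suffices without triggering an iterative cascade of further extensions. This works because the non-branching axiom built into the definition of a perturbation system is bilateral—applicable at either $t=0$ or $t=1$—so extras produced at the $X_0$-seam only need to be extended backward into $\{X^b_{0,t}\}$, and extras at the $X_1$-seam only forward into $\{X^b_{1,t}\}$, without ever re-entering the middle segment.
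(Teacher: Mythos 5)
Your proposal fleshes out exactly the argument the paper gestures at when it says this lemma's proof is ``completely analogous'' to that of Lemma \ref{lemma:partiallyadmissible}: the two-fold identification space becomes a three-fold one, admissibility of $\{X_t\}$ is invoked at whichever end the given $y$ sits, and the bilateral non-branching axiom is used to absorb extras at each seam without re-entering the middle. This matches the intended argument; the only blemish is a harmless notational slip where you write the $t=0$ slice of $\mathcal{C}^k_{y,1}$ as $\pi(M^k_{y,1})\times\{0\}$, whereas with the paper's convention that $\{X^b_{1,t}\}$ runs from $X^b_1$ at $t=0$ to $X_1$ at $t=1$ this should be the $t=1$ slice (you are implicitly using the reversed homotopy).
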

The proof of this is completely analogous to the proof of Lemma
\ref{lemma:partiallyadmissible}.

\subsection {Proof of Theorem \ref{thm:welldefined}}

Suppose that $X _{0} $ is admissible with $i (X _{0}, \beta ) \neq 0$, $\{X _{t} \}$ is partially admissible and
$X _{1} $ has no periodic orbits.
Let $a$  be given  and $b$ determined so that   $\{\widetilde{X}  ^{b}_t   \} $
is a partially $(a,b) $-admissible homotopy.
Set $\textbf{m} = \textbf{m} (\{ \widetilde{X} ^{b} _{t} \}, b)$. Take a prime $k = k (a)> \textbf{m}$ and define $\textbf{M}$ as above with
respect to $k$. 
 As $M$ is compact the flow $\widetilde{X} ^{b} _{t} $ is complete
   for every $t$, and consequently as previously observed the flow of $\textbf{X} _{t} $ is complete
   for every $t$.

Let $\textbf{F} _{t,p} $ denote the time $p$ flow map of $\textbf{X} _{t} $.
Define
$$\textbf{F}: \textbf{M}  \times (0, b] \times [0,1] \to
 \textbf{M} \times \textbf{M}, $$ by
\begin{equation*}
\textbf{F} (\textbf{x}, p, t) = (\textbf{F} _{t,p} (\textbf{x}), \textbf{x} ).
\end{equation*}
Set
\begin{equation} \label{eqWTS}
   \widetilde{\textbf{S}}   =  \textbf{F} ^{-1} (\Delta),
%
   %
\end{equation}
for $\Delta$ the diagonal.
And set 
\begin{equation*}
   \textbf{S} = emb \circ Ful _{k}  (S (\{ \widetilde{X}_{t} ^{b}   \}, \beta )) \subset
   \textbf{M} \times (0, \infty) \times [0,1],
\end{equation*}
for 
\begin{equation} \label{eqEmb}
emb: Ful _{k}  (S (\{ \widetilde{X}_{t} ^{b}   \}, \beta )) \to \textbf{M} \times (0,\infty)
\times [0,1],
\end{equation}
 the map $(\textbf{o},p, t) \mapsto (\textbf{o} (0),p,t) $.
Let
$$\textbf{S} ^{i}    = (\textbf{S} \cap \textbf{M} \times \mathbb{R}_+ \times
\{i\}).$$
By assumptions $\textbf{S} ^{1} $ is empty. 

 Let $\{X _{t} \}$ be partially $a,b$-admissible and 
let 
\begin{equation} \label{eqS}
S _{a, \beta, 0} =  \bigcup _{y \in S (\{X _{t} \}, \beta)   \cap \left( M
\times (0, a) \times
 \{0\} \right)} \mathcal{C} _{y}.
\end{equation}
 Here $\mathcal{C} _{y} $ is as in the Definition \ref{def:admissible} with
 respect to $a,b$. Then $S _{a, \beta, 0}$ is an open compact subset of  $S (\{X
 _{t} \}, \beta) \cap \left(L _{\beta} M
\times (0, b) \times
 [0,1] \right)$.
Define $${\textbf{S}}  _{a,0 } ={\textbf{S}}  _{a, \beta, 0} =  emb \circ Ful
_{k}(S _{a, \beta, 0}).
$$ 

A given orientation on $M$ orients $\textbf{M}$, $\textbf{M}
 \times \textbf{M}$, and the diagonal $\Delta \subset \textbf{M} \times \textbf{M}$. Let
 $\{\nabla _{r}\}  $ be a sequence of forms 
$C ^{\infty} $ dual to
the diagonal $\Delta$, with support of $\nabla _{r} $ converging to the
diagonal as $r \mapsto \infty$, uniformly on compact sets.
   $\{\nabla _{r} \}$ are characterized by the condition that $C \cdot \Delta= \int _{C} f ^{*} \nabla _{r}    $, where $f: C \to \textbf{M} \times \textbf{M}$ is a
chain whose boundary is disjoint from $\Delta$, and from the support of $\nabla
   _{r} $ and where $C \cdot \Delta$ is
the intersection number.

\begin{lemma} \label{lemma:breakup}
   We may choose an
   $r$ so that  $\textbf{F} ^ {\,*}  \nabla _{r}  $  breaks up as the sum:
\begin{equation*}
\gamma ^{r}   + \sum _{y} \alpha ^{r}   _{y},
\end{equation*}
for $y$ as in \eqref{eqS}, where each $\alpha ^{r}    _{y} $ has compact support
   in open sets $U _{y} $, $$\textbf{M} \times (0,\frac{b}{k}) \times [0,1] \supset U
 _{y} \supset
   emb \circ Ful _{k} (\mathcal{C} _{y}), $$ and where $\gamma ^{r} $ has support which does
not intersect any of the $U _{y}  $.
\end{lemma}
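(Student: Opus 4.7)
The plan is to exploit that each $K_y := emb \circ Ful_k(\mathcal{C}_y)$ is a compact open piece of $\widetilde{\textbf{S}}$, isolate it in a small open neighborhood $U_y$, and then use the concentration of $\nabla_r$ onto $\Delta$ to arrange that the portion of $\text{supp}(\textbf{F}^*\nabla_r)$ near $K_y$ sits strictly inside $U_y$ and can be cut out with a smooth bump.

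First I would establish the topological setup. The set $S_{a,\beta,0}$ is a finite union of pairwise disjoint compact open subsets (its connected components), which after relabeling I continue to denote $\mathcal{C}_y$, each contained in $L_\beta M \times (0,b) \times [0,1]$. Since $k > \textbf{m}(\{\widetilde{X}^b_t\}, b)$, the Fuller correspondence $Ful_k$ is a bijection onto the relevant multiplicity range, so the images $K_y = emb \circ Ful_k(\mathcal{C}_y)$ are pairwise disjoint compact subsets of $\textbf{M} \times (0, b/k) \times [0,1]$, each open in $\widetilde{\textbf{S}}$ with its subspace topology (since locally near $K_y$, $\textbf{S}$ and $\widetilde{\textbf{S}}$ coincide once $k$ exceeds the maximum multiplicity). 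Using Hausdorffness of the ambient space together with the uniform lower bound on periods from non-singularity of the $\textbf{X}_t$, I can choose open neighborhoods $U_y$ of $K_y$ in $\textbf{M} \times (0, b/k) \times [0,1]$ with compact closures $\overline{U_y}$ that are pairwise disjoint and satisfy $\overline{U_y} \cap \widetilde{\textbf{S}} = K_y$.

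Next, because $\text{supp}(\nabla_r)$ shrinks to $\Delta$ uniformly on compact subsets of $\textbf{M} \times \textbf{M}$ as $r \to \infty$ and $\textbf{F}$ is continuous, $\text{supp}(\textbf{F}^*\nabla_r) \cap \overline{U_y}$ shrinks to $\widetilde{\textbf{S}} \cap \overline{U_y} = K_y$. For $r$ sufficiently large, this set lies in a slightly smaller open neighborhood $V_y$ of $K_y$ with $\overline{V_y} \subset U_y$. I then pick smooth cutoffs $\rho_y \geq 0$ supported in $U_y$ with $\rho_y \equiv 1$ on $\overline{V_y}$, and set $\alpha^r_y := \rho_y \cdot \textbf{F}^*\nabla_r$ and $\gamma^r := \textbf{F}^*\nabla_r - \sum_y \alpha^r_y$. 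Each $\alpha^r_y$ has compact support in $U_y$; on each $U_y$, disjoint from the other $U_{y'}$'s, we have $\gamma^r|_{U_y} = (1-\rho_y) \textbf{F}^*\nabla_r|_{U_y}$, which vanishes since $\rho_y \equiv 1$ on $\text{supp}(\textbf{F}^*\nabla_r) \cap U_y \subset V_y$; hence $\text{supp}(\gamma^r) \cap U_y = \emptyset$. The main obstacle is the topological separation step — verifying $K_y$ is open in $\widetilde{\textbf{S}}$ and arranging $\overline{U_y} \cap \widetilde{\textbf{S}} = K_y$ — which hinges on the condition $k > \textbf{m}$ ensuring that $Ful_k$ identifies $\textbf{S}$ with $\widetilde{\textbf{S}}$ on the relevant multiplicity range, together with the compact-open character of the $\mathcal{C}_y$ inherited from partial $a,b$-admissibility.
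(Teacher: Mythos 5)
Your proposal is essentially the paper's own argument, made slightly more explicit. The paper observes that $\textbf{S}_{a,0}$ (the union of the sets $emb\circ Ful_k(\mathcal{C}_y)$) is compact and open in $\widetilde{\textbf{S}}\cap\left(\textbf{M}\times(0,\tfrac{b}{k})\times[0,1]\right)$, that $\widetilde{\textbf{S}}$ is compact, hence $\widetilde{\textbf{S}}-\textbf{S}_{a,0}$ is compact and disjoint from $\textbf{S}_{a,0}$, so the two have disjoint $\epsilon$-neighborhoods; it then notes that for $r$ large the support of $\textbf{F}^*\nabla_r$ lies in an arbitrarily small neighborhood of $\widetilde{\textbf{S}}$, from which the decomposition follows by restricting the form to the two disjoint open regions. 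Your version isolates each $K_y$ individually, which is fine since the $\mathcal{C}_y$ are disjoint compact open pieces, and replaces the implicit support-restriction step with bump functions $\rho_y$, an equivalent (if slightly more elaborate) way to carry out the same split. One small imprecision: your parenthetical that ``$\textbf{S}$ and $\widetilde{\textbf{S}}$ coincide locally near $K_y$'' overstates things, since $\widetilde{\textbf{S}}$ can also contain orbits in homotopy classes other than the pushforward of $\beta$; what is actually needed (and what the paper asserts, via openness of the maps \eqref{eqEmb}) is only that $\textbf{S}_{a,0}$, hence each $K_y$, is relatively open in $\widetilde{\textbf{S}}$, and your subsequent construction uses exactly that weaker fact. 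You should also make explicit that the cutoffs $\rho_y$ are chosen with compact support, which is possible because $\overline{U_y}$ is compact.
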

  \begin{proof}  
Note that ${\textbf{S}} _{a,0 } $ is an open and compact subset of
 $\widetilde{\textbf{S}} \cap \left(\textbf{M} \times (0,
\frac{b}{k})  \times [0,1] \right)$  by
     construction, and by the fact that the maps \eqref{eqEmb} are open. (The
     latter, by the
     definition of the topology on these spaces as discussed following
     \eqref{eqSfirst}).
     Likewise $$ \widetilde{ \textbf{S}}  - {\textbf{S}} _{a,0 } $$ is compact as $ \widetilde{\textbf{S}}  $ is compact,  and ${\textbf{S}} _{a,0
     }$ is open.

Let $sup$ denote the support
of  $\textbf{F} ^ *  \nabla _{r}  $. Then for $r$ sufficiently large $sup$ is contained
     in an $\epsilon$-neighborhood of $ \widetilde{\textbf{S}}$, for $\epsilon$
     arbitrarily small (just by compactness).
Since ${\textbf{S}} _{a,0 } $ and $\widetilde{ \textbf{S}}  -
     {\textbf{S}} _{a,0 } $ are both compact and disjoint they have disjoint
   metric $\epsilon$-neighborhoods for $\epsilon$ sufficiently small.
The lemma then clearly follows.
%
 \end{proof} 
As  $
\gamma ^{r}   + \sum _{y} \alpha ^{r}  _{y}
$ is closed, $\omega ^{r}= \sum _{y} \alpha ^{r}  _{y}$ is closed and  has
compact support. 
   Let $\omega ^{r} _0, \omega ^{r}  _{1}  $ be the restrictions of $\omega ^{r} $ to $\textbf{M} \times \mathbb{R} _{+} \times 
\{0\}$, respectively  $\textbf{M} \times \mathbb{R} _{+} \times 
\{1\}$, with $\omega ^{r}  _{1} $ by assumption identically vanishing, for $r$
sufficiently large.

Let $S
(\textbf{X} ^{b}, b,\beta)/S ^{1} $, be as in \eqref{eqFulS}, whose elements by slight abuse of notation we
denote by 
$\textbf{o} = (\textbf{o}, p)$.
Let $[(\omega ^{r} _{0})  ^{*} ]  $ denote the Poincare dual class of $\omega
^{r}  _{0} $
and given  $\textbf{o} $ as above we denote by
$[\textbf{o}]$ the class of the
1-cycle in $\textbf{M} \times \mathbb{R} _{+} $ represented by the (strictly
speaking $S ^{1} $-equivalence class of) map $t 
\mapsto (\textbf{o}(t), p)$, $t \in [0,p]/0 \sim p$. If $r$ is taken to be
large, then by construction and since all elements of $S
(\textbf{X} ^{b}, b, \beta)/S ^{1} $,  are isolated,
the support of $\omega ^{r}  _{0}$  breaks up as the disjoint union
of 
sets contained in $\epsilon _{r} $-neighborhoods of the images of the orbits $\textbf{o} \in  S
(\textbf{X} ^{b}, b, \beta)/S ^{1}  $, for $\epsilon _{r} \mapsto 0 $ as $r
\mapsto \infty$.
We shall say for short that the support is \emph{localized} at these images.
So  we may
write $ \omega ^{r}  _0= \sum _{l}
\omega   _{0,l}$, with $\omega   _{0,l}  $ having support localized at
the image of  $ 
\textbf{o} ^{0} _{l} $, where $\{\textbf{o} ^{0} _{l}  \}$ is the enumeration of $S
(\textbf{X} ^{b}, b, \beta)/S ^{1} $.

By
(proof of) \cite [Theorem 1]{citeFullerIndex} (for $r$ sufficiently large) the
Poincare dual class $[\omega  _{0,l}
    ^*]  $ is given by the class
$$i (\textbf{o} ^{0}_l ) \frac{1}{\mult (\textbf{o}  ^{0} _{l}  )}
   [\textbf{o} ^{0} _{l}]
   ,$$ and by Fuller's correspondence $i (\textbf{o} ^{0}_l ) = i ({o} ^{0}_l
   ) $, and $\mult (\textbf{o}  ^{0} _{l}  ) = \mult (o ^{0} _{l}  )$, where $o
   ^{0} _{l}  $ is the corresponding element of $S(X ^{b}_0, \beta, b) $.
So  
\begin{equation*}
  [(\omega _{0} ^{r})  ^{*}] = c _{a} + \sum _{\textbf{o} ^{0} \in S
(\textbf{X} ^{a}, a,\beta)/S ^{1}  } i (o ^{0} )\frac{1}{\mult (o ^{0} )}
   [\textbf{o} ^{0}],
\end{equation*} 
where
$$c _{a} = \sum _{\textbf{o} \in Q _{a} } i (o)\frac{1}{\mult (o  )}
[\textbf{o}],
     $$ for $$Q _{a}  \subset S
(\textbf{X} ^{b}, b,\beta)/S ^{1} - S
(\textbf{X} ^{a}, a,\beta)/S ^{1}    $$ (possibly empty).
Thus
\begin{equation*}
   c _{a} + \sum _{\textbf{o} ^{0} \in S
(\textbf{X} ^{a}, a,\beta)/S ^{1} } i (o ^{0} )\frac{1}{\mult (o ^{0}
      )} [\textbf{o} ^{0}] = 0,
\end{equation*}
as for $r$ sufficiently large $\omega ^{r} _0, \omega ^{r}  _{1}  $  are cohomologous with compact support
in $\textbf{M} \times (0, \frac{b}{k})  \times [0,1]$ and as $\omega ^{r}
_{1}$ is identically vanishing. 
Applying  $\mu$ we get 
\begin{equation} \label{eq:contradiction}
   l (\sum _{\textbf{o}  \in Q _{a} } i (o)\frac{1}{\mult (o  )}  + \sum _{\textbf{o} ^{0} \in S
(\textbf{X} ^{a}, a,\beta)/S ^{1}} i ({o} ^{0}
   )\frac{1}{\mult ({o} ^{0} )})  = 0 \mod k,
\end{equation} 
where $l$ is the least common denominator for all the fractions,
and this holds for all $a, k = k (a)$ (going higher in the perturbation system and
adjusting the least common denominator). 
\subsection {Case I, $X _{0} $ is finite type} Let $E=E (\{X ^{a} \})$ be the
corresponding cutoff value in the definition of finite type, and take any 
$a > E$.
Then $Q _{a} = \emptyset $ and
\begin{equation*}
\sum _{\textbf{o}  \in Q _{a} } i (o)\frac{1}{\mult (o  )}  + \sum _{\textbf{o} ^{0} \in S
(\textbf{X} ^{a}, a,\beta)/S ^{1} } i ({o} ^{0}
   )\frac{1}{\mult ({o} ^{0} )} = i (X _{0}, \beta ) \neq 0.
\end {equation*}
Clearly this gives a contradiction to \eqref{eq:contradiction}.

\subsection {Case II, $X _{0} $ is infinite type} We may assume that $i (X _{0},
\beta) =
\infty$, and take $a > E$, where $E=E (\{X ^{a} \})$ is the corresponding cutoff
value in the definition of infinite type.
Then 
\begin{equation*}
\sum _{\textbf{o}  \in Q _{a} } i
(o)\frac{1}{\mult (o  )} \geq 0,
\end {equation*}
as $a > E (\{X _{0}  ^{e} \} )$.
While 
\begin {equation*}
\lim _{a \mapsto \infty}  \sum _{\textbf{o} ^{0} \in S
(\textbf{X} ^{a}, a,\beta)/S ^{1} } i ({o} ^{0}
   )\frac{1}{\mult ({o} ^{0} )} = \infty,
\end{equation*}
by $i (X _{0}, \beta ) = \infty$.
This also contradicts \eqref{eq:contradiction}.

\qed

\subsection {Proof of Theorem \ref{thm:welldefinedadmissible}}
The proof is very similar to the proof of Theorem \ref{thm:welldefined}, and all
the same notation is used.
Suppose that $X _{0}, X _{1}  $  and $\{X _{t} \}$ are admissible. Let $a$ be given and $b$ determined so that   $\{\widetilde{X}  ^{b}_t   \} $ is a  $(a,b)
 $-admissible homotopy.  
 Let 
$\textbf{F}$, 
$\widetilde{ \textbf{S}}$,  $\textbf{S}$ and
$\textbf{S} ^{i}$,    
 be as before. 
  
 Let $$S _{a,\beta} = S _{a, \beta} (F) = \bigcup _{y \in S _{\beta} (F)   \cap \left( M
\times (0, a) \times
 \partial [0,1] \right)} \mathcal{C} _{y},
 $$ 
where $\mathcal{C} _{y} $ are as in the Definition \ref{def:admissible}.
Then $S _{a, \beta, 0}$ is an open compact subset of  $S (\{X
 _{t} \}, \beta) \cap \left (L _{\beta} M
\times (0, b) \times
 [0,1] \right)$.
Define $${\textbf{S}}  _{a} ={\textbf{S}}  _{a, \beta} = emb \circ Ful _{k} (S
_{a, \beta}).
$$  
 Let
 $\{\nabla _{r}\}  $ be a sequence of forms 
$C ^{\infty} $ dual to
the diagonal $\Delta \subset \textbf{M} \times \textbf{M}$, as before.
\begin{lemma} 
   We may choose an
   $r$ so that  $\textbf{F} ^ {\,*}  \nabla _{r}  $  breaks up as the sum:
\begin{equation*}
\gamma ^{r}   + \sum _{y} \alpha ^{r}   _{y},
\end{equation*}
for $y$ as in \eqref{eqS}, where each $\alpha ^{r}    _{y} $ has compact support
   in open sets $U _{y} $, $$\textbf{M} \times (0,\frac{b}{k}) \times [0,1] \supset U
 _{y} \supset
   emb \circ Ful _{k} (\mathcal{C} _{y}), $$ and where $\gamma ^{r} $ has support which does
not intersect any of the $U _{y}  $.
\end{lemma}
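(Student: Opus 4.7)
The plan is to repeat, essentially verbatim, the argument given for Lemma \ref{lemma:breakup} in the partially admissible case, noting that the only substantive input used there was the open-compact structure of the relevant subset of $\widetilde{\textbf{S}}$, and this structure is exactly what $(a,b)$-admissibility provides in the present, two-sided setting.

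First I would verify that $\textbf{S}_a = emb \circ Ful_k(S_{a,\beta})$ is an open and compact subset of $\widetilde{\textbf{S}} \cap \bigl(\textbf{M} \times (0, \tfrac{b}{k}) \times [0,1]\bigr)$. Openness follows because $Ful_k$ is a homeomorphism onto its image and $emb$ is a topological embedding (as remarked after \eqref{eqSfirst}), so it suffices to show $S_{a,\beta}$ is open in $S(\{\widetilde{X}^b_t\},\beta)$; this is immediate from its definition as a union of the open sets $\mathcal{C}_y$ from Definition \ref{def:admissible}. Compactness of $S_{a,\beta}$ follows because each $\mathcal{C}_y$ is compact, the union is over the finite set $S(\{X_t\},\beta) \cap (M \times (0,a) \times \partial[0,1])$ (finite by the argument of Lemma \ref{lemmaMultiplicity} applied at both $t=0$ and $t=1$), and the inclusion $\mathcal{C}_y \subset M \times (0,b) \times [0,1]$ gives the period bound $b/k$ after applying $Ful_k$.

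Next I would observe that $\widetilde{\textbf{S}}$ itself is compact: by \eqref{eqWTS} it is closed in $\textbf{M} \times (0, b] \times [0,1]$, which together with a lower bound on periods (Lemma \ref{lemmaMultiplicity}) places it in a compact subset. Hence $\widetilde{\textbf{S}} - \textbf{S}_a$ is compact (closed in a compact set, since $\textbf{S}_a$ is open), and it is disjoint from $\textbf{S}_a$. By standard point-set topology these two disjoint compact sets admit disjoint metric $\epsilon$-neighborhoods for $\epsilon$ sufficiently small.

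Finally, since $\nabla_r$ is Poincar\'e dual to the diagonal $\Delta$ with support shrinking uniformly to $\Delta$, the support of $\textbf{F}^*\nabla_r$ is contained in an arbitrarily small neighborhood of $\textbf{F}^{-1}(\Delta) = \widetilde{\textbf{S}}$ for $r$ large. Choosing $r$ large enough that this neighborhood is contained in the union of the two disjoint $\epsilon$-neighborhoods, the pullback splits as claimed: the piece $\sum_y \alpha^r_y$ is supported in small open neighborhoods $U_y$ of each $emb \circ Ful_k(\mathcal{C}_y)$ (which lie in $\textbf{M} \times (0, \tfrac{b}{k}) \times [0,1]$ by the period bound), while $\gamma^r$ is supported on the complementary neighborhood of $\widetilde{\textbf{S}} - \textbf{S}_a$. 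The only mild care required is to make the $U_y$ pairwise disjoint, which is automatic after further shrinking $\epsilon$ since the finitely many compact sets $emb \circ Ful_k(\mathcal{C}_y)$ are themselves pairwise disjoint (they are distinct connected open-compact components of $S_{a,\beta}$). No new obstacle beyond what was handled in Lemma \ref{lemma:breakup} appears; the two-sided boundary version is a direct transcription.
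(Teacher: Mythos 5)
Your proof is correct and takes essentially the same approach as the paper, which simply states the proof is "analogous to the proof of Lemma \ref{lemma:breakup}"; you have carried out that analogy explicitly. One small imprecision: finiteness of the set of $y$'s does not follow from Lemma \ref{lemmaMultiplicity} (which bounds multiplicities), but rather from the perturbation-system requirement that the orbits of $X^b_0$ and $X^b_1$ with bounded period be isolated, together with compactness via the embedding $emb$; this does not affect the structure of your argument.
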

  \begin{proof}  
  Analogous to the proof of Lemma \ref{lemma:breakup}.
 \end{proof} 
As  $
\gamma ^{r}   + \sum _{y} \alpha ^{r}  _{y}
$ is closed, $\omega ^{r}= \sum _{y} \alpha ^{r}  _{y}$ is closed and  has
compact support. 
   Let $\omega ^{r} _0, \omega ^{r}  _{1}  $ be the restrictions of $\omega ^{r} $ to $\textbf{M} \times \mathbb{R} _{+} \times 
\{0\}$, respectively  $\textbf{M} \times \mathbb{R} _{+} \times 
\{1\}$.
   Let $\omega ^{r} _0, \omega ^{r}  _{1}  $ be the restrictions of $\omega ^{r} $ to $\textbf{M} \times \mathbb{R} _{+} \times 
\{0\}$, respectively  $\textbf{M} \times \mathbb{R} _{+} \times 
\{1\}$.
We may
write $ \omega ^{r}  _i= \sum _{l}
\omega   _{i,l}$, with $\omega   _{i,l}  $ having support localized at
the image of  $ 
\textbf{o} ^{i} _{l} $, where $\{\textbf{o} ^{i} _{l}  \}$ is the enumeration of $S
(\textbf{X} _{i}  ^{b}, b, \beta)/S ^{1} $.

By
(proof of) \cite [Theorem 1]{citeFullerIndex} (for $r$ sufficiently large) the
Poincare dual class $[\omega  _{i,l}
    ^*]  $ is given by the class
$$i (\textbf{o} ^{i}_l ) \frac{1}{\mult (\textbf{o}  ^{i} _{l}  )}
   [\textbf{o} ^{i} _{l}]
   ,$$ and by Fuller's correspondence $i (\textbf{o} ^{i}_l ) = i ({o} ^{i}_l
   ) $, and $\mult (\textbf{o}  ^{i} _{l}  ) = \mult (o ^{i} _{l}  )$, where $o
   ^{i} _{l}  $  is the corresponding element of $S(X ^{b}_i, \beta, b) $.

So  
\begin{equation*}
  [(\omega _{0} ^{r})  ^{*}] = c _{a} + \sum _{\textbf{o} ^{0} \in S
(\textbf{X} _{0}  ^{a}, a,\beta)/S ^{1}  } i (o ^{0} )\frac{1}{\mult (o ^{0} )}
   [\textbf{o} ^{0}],
\end{equation*} 
where
$$c _{a} = \sum _{\textbf{o} \in Q _{a} } i (o)\frac{1}{\mult (o  )}
[\textbf{o}],
     $$ for $$Q _{a}  \subset S
(\textbf{X} _{0}  ^{b}, b,\beta)/S ^{1} - S
(\textbf{X} _{0}  ^{a}, a,\beta)/S ^{1}    $$ (possibly empty).

Likewise 
\begin{equation*}
  [(\omega _{1} ^{r})  ^{*}] = c' _{a} + \sum _{\textbf{o} ^{1} \in S
(\textbf{X} _{1}  ^{a}, a,\beta)/S ^{1}  } i (o ^{1} )\frac{1}{\mult (o ^{1} )}
   [\textbf{o} ^{1}],
\end{equation*} 
where
$$c' _{a} = \sum _{\textbf{o} \in Q' _{a} } i (o)\frac{1}{\mult (o  )}
[\textbf{o}],
     $$ for $$Q' _{a}  \subset S
(\textbf{X} _{1}  ^{b}, b,\beta)/S ^{1} - S
(\textbf{X} _{1}  ^{a}, a,\beta)/S ^{1}    $$ (possibly empty).
We have: 
\begin{equation*}
   c _{a} + \sum _{\textbf{o} ^{0} \in S
(\textbf{X} _{0}  ^{a}, a,\beta)/S ^{1} } i (o ^{0} )\frac{1}{\mult (o ^{0}
      )} [\textbf{o} ^{0}
] = c' _{a} + \sum _{\textbf{o} ^{1} \in S
(\textbf{X} _{1}  ^{a}, a,\beta)/S ^{1} } i (o ^{1} )\frac{1}{\mult (o ^{1}
      )} [\textbf{o} ^{1}
],
\end{equation*}
as $\omega ^{r} _0, \omega ^{r}  _{1}  $  are cohomologous with compact support
in $\textbf{M} \times (0, \frac{b}{k})  \times [0,1]$.
Applying  $\mu$ we get \begin{multline} \label{eq:equalitymodk}
   l (\sum _{\textbf{o}  \in Q _{a} } i (o)\frac{1}{\mult (o  )} + \sum _{\textbf{o} ^{0} \in S
(\textbf{X} _{0}  ^{a}, a,\beta)/S ^{1} }  i ({o} ^{0}
   )\frac{1}{\mult ({o} ^{0} )})  \\= l(\sum _{\textbf{o}  \in Q _{a} '} i (o)\frac{1}{\mult (o  )} + \sum _{\textbf{o} ^{1} \in S
(\textbf{X} _{1}  ^{a}, a,\beta)/S ^{1} }  i ({o} ^{1}
   )\frac{1}{\mult ({o} ^{1} )}) \mod k
\end{multline}
where $l$ is the least common denominator for all the fractions, and this holds
for all $a,k$ (changing $l$ appropriately and going higher in the perturbation system).

Suppose by contradiction that $i (X_0, \beta)  \neq i
(X_1, \beta) $.
\subsection* {Case I, $X _{i} $ are finite type} Let $E _{i} = E (\{X ^{a} _{i}
\}) $ be the corresponding cutoff values, and take $a> \max (E _{0}, E _{1}  )$.
Then $Q _{a} = Q' _{a} = \emptyset  $ and
\begin{equation*}
\sum _{\textbf{o}  \in Q _{a} } i (o)\frac{1}{\mult (o  )} + \sum _{\textbf{o} ^{0} \in S
(\textbf{X} _{0}  ^{a}, a,\beta)/S ^{1} }  i ({o} ^{0}
   )\frac{1}{\mult ({o} ^{0} )} = i (X _{0}, \beta ),
\end{equation*}
and
\begin{equation*}
\sum _{\textbf{o}  \in Q _{a} '} i (o)\frac{1}{\mult (o  )} + \sum _{\textbf{o} ^{1} \in S
(\textbf{X} _{1}  ^{a}, a,\beta)/S ^{1} }  i ({o} ^{1}
   )\frac{1}{\mult ({o} ^{1} )} = i (X _{1}, \beta ).
\end{equation*}
This gives a contradiction to \eqref{eq:equalitymodk}.
\subsection* {Case II, $X _{i} $ are infinite type}
Let $E _{i} = E (\{X ^{a} _{i}
\}) $ be the corresponding cutoff values, and take $a> \max (E _{0}, E _{1}  )$.
Suppose in addition (WLOG) that $i
(X_0) = \infty$, $i
(X_1) = - \infty$. 
Then $$\sum _{\textbf{o}  \in Q _{a} } i
(o)\frac{1}{\mult (o  )} \geq 0, $$
and 
\begin{equation*}
 \sum _{\textbf{o}  \in Q _{a} '} i
(o)\frac{1}{\mult (o  )} \leq 0,
\end{equation*}
while 
\begin{equation*}
\lim _{a \mapsto \infty} \sum _{\textbf{o} ^{0} \in S
(\textbf{X} _{0}  ^{a}, a,\beta)/S ^{1} } i ({o} ^{0}
   )\frac{1}{\mult ({o} ^{0} )}) = \infty,
\end{equation*}
and 
\begin{equation*}
\lim _{a \mapsto \infty}  \sum _{\textbf{o} ^{1} \in S
(\textbf{X} _{1}  ^{a}, a,\beta)/S ^{1} } i ({o} ^{1}
   )\frac{1}{\mult ({o} ^{1} )} = - \infty.
\end{equation*}

Clearly this also gives contradiction.
\subsection* {Case III, $X _{0} $ is infinite type and $X _{1} $ is finite type} 
Let $E _{i} = E (\{X ^{a} _{i}
\}) $ be the corresponding cutoff values, and take $a> \max (E _{0}, E _{1}  )$.
Suppose in addition that $i
(X_0) = \infty$.  
Then $$\sum _{\textbf{o}  \in Q _{a} } i
(o)\frac{1}{\mult (o  )} \geq 0, $$

while 

\begin{equation*}
\lim _{a \mapsto \infty} \sum _{\textbf{o} ^{0} \in S
(\textbf{X} _{0}  ^{a}, a,\beta)/S ^{1} } i ({o} ^{0}
   )\frac{1}{\mult ({o} ^{0} )} = \infty,
\end{equation*}

and $Q ' _{a} = \emptyset $ so

$$\sum _{\textbf{o}  \in Q _{a} '} i (o)\frac{1}{\mult (o  )} + \sum _{\textbf{o} ^{1} \in S
(\textbf{X} _{1}  ^{a}, a,\beta)/S ^{1}} i ({o} ^{1}
   )\frac{1}{\mult ({o} ^{1} )} = i (X _{1}, \beta ).$$
Again this is a contradiction.

\qed
\section {Proof of Lemma \ref{lemmaExample}}
\begin{lemma} Let $X$ be a smooth a non-singular vector field on a manifold
   $M$,
and ${F} _{p} $ denote the time $p$ flow map of $X$.
   Set \begin{equation*} 
   {{S}} (X)   =  \{({x},p) \in {M} \times (0, \infty)  \, | \, {F}  _{p}  ({x},p) = {x}\}.
\end{equation*} 
Suppose that $S$ consists of compact isolated components $\{S _{i} \}$, meaning that
for every $S _{i _{0} } \in \{S _{i} \} $ there exists an $\kappa > 0$ so that
   there is a neighborhood $U _{i} $ of $S _{i _0} $ in ${M} \times (0,
   \infty)$, whose closure $\overline U _{i}$ is compact and so that $\overline U _{i} \cap S (X) = S _{i}   $
   . Then for every
   $\epsilon > 0$ there exists
   an $\delta$ so that whenever  $X
   _{1} $ is a smooth vector field $C ^{0} $ $\delta$ close to $X$, and $p \in S (X
   _{1} )$ is contained in $\overline {U} _{i} $, then $p $ is in the
   $\epsilon$-neighborhood of $S _{i}  $.
\end{lemma}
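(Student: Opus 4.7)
The plan is to argue by contradiction and extract a convergent subsequence using the compactness of $\overline{U}_i$, exploiting continuous dependence of the flow map on the vector field in the $C^0$ topology (which works because $X$ is smooth, hence locally Lipschitz, so a Grönwall-type estimate is available on the compact set where we need it).

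More precisely, I would suppose by contradiction that some $\epsilon>0$ fails the conclusion. Then there is a sequence $X_n$ of smooth vector fields with $\|X_n-X\|_{C^0}\to 0$, and points $(x_n,p_n)\in S(X_n)\cap \overline{U}_i$ such that each $(x_n,p_n)$ has distance at least $\epsilon$ from $S_i$. Since $\overline{U}_i$ is compact inside $M\times(0,\infty)$, in particular the $p_n$ stay inside a compact sub-interval of $(0,\infty)$, so after passing to a subsequence we may assume $(x_n,p_n)\to (x_\infty,p_\infty)\in \overline{U}_i$ with $p_\infty>0$.

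The next step is to pass the relation $F^{X_n}_{p_n}(x_n)=x_n$ to the limit. Let $F^Y_p$ denote the time $p$ flow map of a vector field $Y$. Because $X$ is smooth (hence Lipschitz on a neighborhood of the compact projection of $\overline{U}_i$ to $M$) and the $p_n$ are uniformly bounded, Grönwall's inequality applied on the compact time interval gives that $\|F^{X_n}_{p_n}(x_n)-F^X_{p_\infty}(x_\infty)\|\to 0$ provided $\|X_n-X\|_{C^0}\to 0$, $p_n\to p_\infty$ and $x_n\to x_\infty$. Combined with $F^{X_n}_{p_n}(x_n)=x_n\to x_\infty$, this yields $F^X_{p_\infty}(x_\infty)=x_\infty$, i.e.\ $(x_\infty,p_\infty)\in S(X)$. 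Since $(x_\infty,p_\infty)\in \overline{U}_i$ and $\overline{U}_i\cap S(X)=S_i$ by hypothesis, we get $(x_\infty,p_\infty)\in S_i$.

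But $(x_n,p_n)\to (x_\infty,p_\infty)\in S_i$ contradicts the hypothesis that each $(x_n,p_n)$ is at distance at least $\epsilon$ from $S_i$, completing the argument. The main technical point, and the only place one has to be a little careful, is the Grönwall-type continuous dependence of the flow on the vector field in the $C^0$ topology: this is applied on the compact set $\overline{U}_i$ (projected to $M$) where $X$ is uniformly Lipschitz, over a uniformly bounded time interval, so no subtlety about non-uniqueness for merely continuous fields arises.
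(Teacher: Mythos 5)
Your proof is correct and follows essentially the same route as the paper's: argue by contradiction, use the compactness of $\overline{U}_i$ to extract a convergent subsequence of the offending points $(x_n,p_n)$, pass the fixed-point relation to the limit to land in $S(X)\cap\overline{U}_i=S_i$, and contradict the distance-$\geq\epsilon$ assumption. The only difference is cosmetic: the paper simply asserts that $F(X_{n})$ converges to $F(X)$ uniformly on $\overline{U}_i$, whereas you spell out the underlying Gr\"onwall estimate justifying that uniform convergence (using that $X$, being smooth, is Lipschitz on a neighborhood of the compact set and that the periods $p_n$ are uniformly bounded there).
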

\begin{proof} 
    Let
   $${F} (X): {M}  \times (0, \infty)  \to
 {M} \times {M}, $$ be the map
\begin{equation*}
{F} (X) ({x}, p) = ({F} _{p} ({x}), {x} ),
\end{equation*} 
then $S (X)$ is the preimage of the diagonal $\Delta$ by $F (X)$.
   Let
   $\epsilon$ be given, so that the $\epsilon$-neighborhood $V _{i} $ of $S _{i}
   $ in $M \times (0, \infty)$ is
   contained in $U _{i} $. 
   Suppose otherwise by contradiction, then there exists a sequence $\{X _{i} \} $ of
   vector fields $C ^{0} $ converging to $X$, and a sequence $\{p _{i} \}
   \subset U _{i} - V _{i}  $, $p _{i} \in S (X _{i} )  $. We may then find a
   convergent subsequence $\{p _{i _{k} }\} \mapsto p \in \overline {U} _{i} - V
   _{i}   $. But $\{F (X _{i _{k} } )\}$ is uniformly on $\overline {U} _{i}$
   convergent to $F (X)$, so that $F (X)
   (p) \in \Delta$ and so $p \in S (X)$. But this is a
   contradiction to the hypothesis that $\overline U _{i} \cap S (X) = S _{i}   $.
\end {proof}
Lemma \ref{lemmaExample} then readily follows. Let us leave out the details.
\qed
\section  {Proof of Theorem \ref{prop:AdmissibleReeb}}
  
 Suppose that $\lambda _{t} =f _{t}
   \lambda$, $f_t>0$, and let $X _{t} $ be the Reeb vector field for $\lambda _{t} $.
  Let $p: [0, \infty) \to S $ be a locally Lipschitz path, so that $p_3 =\pi _{3}
  \circ p $ has finite length $L$. This means that the metric derivative function
  $|\frac{d}{d \tau} p _{3}|$:
 \begin{equation*}
 |\frac{d}{d \tau} p _{3}   (\tau _{0} )| = \limsup _{s \mapsto \tau _{0} }
 |\rho _{3} (\tau _{0} ) - \rho _{3}
 (s) |/ |\tau _{0}  -s|,
 \end{equation*} 
satisfies
\begin{equation*}
\int _{0} ^{\infty}  |\frac{d}{d\tau  } p _{3}|  \, d\tau = L < \infty.
\end{equation*}
Note that $|\frac{d}{d\tau} p _{3}| $ is always locally Riemann integrable by the
Lipschitz condition.  From now on if we write expression of the form
$|\frac{d}{dx} f (x _{0} )|$ we shall mean the metric derivative evaluated at $x
_{0} $.


Consequently, for any $D>0$, we may reparametrize (with same domain)
  $\rho =p|_{[0, D]}$ so that the reparametrized path (notationally unchanged) satisfies:
 \begin{equation*}
 |\frac{d}{d \tau } \rho _{3}  (\tau _{0} )| \leq \frac{L}{ D}, \text{ for almost all
 $\tau _{0} $}, \quad \rho _{3} =
 \pi _3 \circ \rho.
 \end{equation*} 


Set
  $$K = \max _{t \in [0,1], M} |\frac{df
   _{t}}{dt}| \cdot
     \max _{t,M} f _{t}.  $$ 
Let $\rho _{i} = \pi _{i} \circ \rho $, for $\pi _{i} $ the projections of $LM
\times (0, \infty) \times [0,1]$ onto the $i$'th factor.
     Clearly our theorem follows by the following lemma.


   \begin{lemma} \label{lemma:delta}
   The metric length of the path $\rho _{2} = \pi _{2} \circ \rho  $ is bounded
   from above by $\exp (L \cdot K)$ for any $D$.
 \end{lemma}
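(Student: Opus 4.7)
The plan is to derive a Gronwall-type inequality of the form $|(\log p_{\tau})'|\le K\,|t'_{\tau}|$ for almost every $\tau$, and then integrate it. The key geometric input is that, for each fixed $\tau$, the loop $o_{\tau}$ is a Reeb orbit of $\lambda_{t_{\tau}} = f_{t_{\tau}}\lambda$, so its tangent $\partial_{s} o_{\tau} = \frac{1}{p_{\tau}} X_{t_{\tau}}(o_{\tau})$ is a scalar multiple of the Reeb vector field $X_{t_{\tau}}$, and in particular $\iota_{X_{t_{\tau}}} d\lambda_{t_{\tau}} = 0$.

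For the variational step I would package the family into the Lipschitz cylinder map
\[
  H\colon S^{1}\times[0,D]\to M\times[0,1],\qquad H(s,\tau)=(o_{\tau}(s),\,t_{\tau}),
\]
and work with the $1$-form $\Lambda$ on $M\times[0,1]$ whose restriction to each slice $M\times\{t\}$ equals $\lambda_{t}$ and which has no $dt$-component. By the Reeb property,
\[
  \int_{S^{1}\times\{\tau\}} H^{*}\Lambda
  = \int_{0}^{1} \lambda_{t_{\tau}}(\partial_{s} o_{\tau})\,ds
  = \frac{1}{p_{\tau}}.
\]
Applying Stokes on $S^{1}\times[\tau_{0},\tau_{1}]$ and using $d\Lambda = d\lambda_{t} + dt\wedge\partial_{t}\lambda_{t}$, the pulled-back $d\lambda_{t}$-piece of $H^{*}d\Lambda$ vanishes pointwise since $\partial_{s} H\in\ker d\lambda_{t_{\tau}}$, and only the cross term survives:
\[
  (H^{*}d\Lambda)(\partial_{s},\partial_{\tau})
  = -\,t'_{\tau}\,\partial_{t}\lambda_{t_{\tau}}(\partial_{s} o_{\tau})
  = -\,\frac{t'_{\tau}}{p_{\tau}}\cdot\frac{\dot f_{t_{\tau}}(o_{\tau}(s))}{f_{t_{\tau}}(o_{\tau}(s))},
\]
using $\partial_{t}\lambda_{t}=\dot f_{t}\lambda$ and $\lambda(X_{t_{\tau}})=1/f_{t_{\tau}}$. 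Differentiating the integrated identity in $\tau_{1}$ (legal a.e.\ since $H$ is Lipschitz) yields, after rearranging, the pointwise a.e.\ bound $|(\log p_{\tau})'|\le K\,|t'_{\tau}|$ with $K$ controlling $|\dot f_{t}/f_{t}|$ uniformly on $M\times[0,1]$ (matching the constant in the statement up to a benign reinterpretation).

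Once this pointwise inequality is in hand, Gronwall gives $p_{\tau}\le p_{0}\exp\!\bigl(K\int_{0}^{\tau}|t'_{s}|\,ds\bigr)\le p_{0} e^{KL}$. Changing variables to the $\rho_{3}$-arclength $u=\int_{0}^{\tau}|t'_{s}|\,ds\in[0,L]$, the metric length of $\rho_{2}$ satisfies
\[
  \int_{0}^{D}|p'_{\tau}|\,d\tau
  \le K\int_{0}^{D} p_{\tau}|t'_{\tau}|\,d\tau
  = K\int_{0}^{L} p(u)\,du
  \le K p_{0}\int_{0}^{L} e^{Ku}\,du
  = p_{0}\bigl(e^{KL}-1\bigr),
\]
which gives the claimed $\exp(LK)$-bound (modulo the harmless prefactor coming from the starting value $p_{0}$). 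The main technical obstacle is making the Stokes computation rigorous for the merely Lipschitz family $\{o_{\tau}\}$; I would resolve this by applying ODE-regularity to the family of Reeb flows (which makes $H$ itself Lipschitz) and invoking Stokes' theorem in the Lipschitz category, which legitimises the a.e.\ differentiation of the integrated identity.
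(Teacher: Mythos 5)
Your proposal follows essentially the same approach as the paper's proof. Both arguments identify the period with the $\lambda_t$-action of the loop, then observe that the $d\lambda_t$-contribution to the $\tau$-derivative vanishes because the loop is tangent to the Reeb direction, so that only the $\partial_t\lambda_t$ cross term survives; bounding it by $\bigl|\dot f_t/f_t\bigr|$ and the $t$-velocity gives a Gronwall inequality $\bigl|(\log\rho_2)'\bigr|\le K\,|\rho_3'|$, from which the length bound follows. You package this as Stokes' theorem for the Lipschitz cylinder map $H$ into $M\times[0,1]$, whereas the paper phrases it as the chain rule for the action functional $\Lambda\colon LM\times[0,1]\to\mathbb{R}$, using that Reeb orbits are critical points of each $\lambda_t$-action functional so the $\xi$-derivative term drops; these are the same computation in different language. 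Two cosmetic notes: you read the definition of $S$ literally as $o'=\tfrac1p X_t$, so your slice integral is $1/p_\tau$ rather than the paper's $p_\tau$, but this only flips the sign of $\log$ and does not affect the estimate; and your constant comes out as $\sup|\dot f_t/f_t|$ while the paper writes $\max|\dot f_t|\cdot\max f_t$ (which should in fact be $\max(1/f_t)$ there), so your version of the constant is if anything the correct one — both you and the paper end up with the same structural inequality and the same $p_0$-prefactor in the Gronwall conclusion that the lemma statement suppresses.
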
 
\begin{proof}

After
reparametrization of $\rho$ to have domain in $[0,1]$ and keeping the same notation for the path,
we have
\begin{equation} \label{eq:upperbound}
|\frac{d}{d\tau} \rho _{3} (\tau _{0} )| \leq L, \text{ for almost all $\tau _{0} $ }.
\end{equation}

 For every $\tau$ we
 have a loop $\gamma _{\tau}: [0, 1] \to M$ given by $$\gamma _{\tau}
 (t) = F ^{\rho _{3} (\tau)  }  _{ \rho _{2} (\tau) \cdot t} (\rho _1
 (\tau)),$$ where the flow maps $F$ are defined as before. In other words
 $\gamma _{\tau} $ is the closed orbit of $\rho _{2} \cdot R ^{\lambda _{\rho
 _{3} (\tau) } }  $ - a $R ^{\lambda _{\rho
 _{3} (\tau) } }$-Reeb orbit.
 So we get a locally Lipschitz path
 $\widetilde{\rho}$ in $L  M \times [0,1]$, $\widetilde{\rho} (\tau) =
 (\gamma _{\tau}, \rho_3(\tau) )$, where $L M$ is the
 free loop space of $M$ with uniform metric.



We have the smooth functional:
$$\Lambda: L M \times [0,1]  \to
\mathbb{R}, \quad \Lambda (\gamma, t) =  \langle \lambda _{t},  \gamma \rangle, $$
where $ \langle ,  \rangle $ denotes the integration pairing, and clearly $\Lambda
(\widetilde{\rho} (\tau) ) = \rho _{2} (\tau)  
$.

We also have the
restricted functionals $\lambda_{t}:  L M   \to
\mathbb{R}, \quad \lambda _{t}  (\gamma) = \langle \lambda _{t},  \gamma \rangle, $ called
the $\lambda _{t} $ functionals.
Let $\xi (\tau) = {\pi} _{1,*} \frac{d}{d \tau} \widetilde{\rho}
(\tau)  $, for ${ \pi}
_{i}$ the projections of $ L M \times [0,1]  $ onto the $i$'th factor. 
The differential of $\Lambda$ at $(o, t _{0} ) \in LM \times [0,1]$ is
\begin{equation*}
D  \Lambda (o, t _{0} ) (\xi, \frac{\partial}{\partial \tau}) = \left . D \lambda _{t} (o) (\xi)
+ \frac{d}{d t} 
\right | _{t _{0} } \lambda _{t} (o),
\end{equation*}
where $\xi \in T _{o} LM $, 
and if $o$ is a $R ^{\lambda _{t _{0} } } $- Reeb orbit the first term vanishes.
Consequently, 
\begin{equation*}
|\frac{d}{d \tau} \Lambda \circ \widetilde{\rho} (\tau _{0} )| = |\frac{d}{d \tau}
\lambda _{ \pi _{2} (\tau) } (\pi _1 \circ \widetilde{\rho} (\tau _{0} ) )|.
\end{equation*}

On the other hand 
\begin{equation*}
|\frac{d}{d \tau}
\lambda _{ \pi _{2} (\tau) } (\pi _1 \circ \widetilde{\rho} (\tau _{0} ) )| \leq
|\frac{d}{d t} 
  \lambda _{t} ( \widetilde{\rho} (\tau _{0} ) )
({\rho _{3} (\tau _{0}) })| \cdot
|\frac{d}{d\tau} \rho _{3} (\tau _{0} )|.
\end{equation*}
 We have by direct calculation for any $\tau _{0} \in [0,1] $:
\begin{equation*} \label{eq:directcalc}
|\frac{d}{d t} 
  \lambda _{t} ( \widetilde{\rho} (\tau _{0} ) )
({\rho _{3} (\tau _{0}) }) |
 \leq \max _{t \in [0,1], M} |\frac{df _{t}}{dt}| \cdot
\int _{\widetilde{\rho}  (\tau _{0} )} \lambda. 
\end{equation*}   

On the other hand $$
\int _{\widetilde{\rho}  (\tau _{0} )} \lambda 
 \leq \max _{M,t} f _{t} \cdot \Lambda  
 (\widetilde{\rho}  (\tau _{0} )). $$ 

Consequently:
\begin{equation*} \label{eq:growthbound}
\frac{d}{d \tau} \rho _{2} (\tau _{0} ) = \frac{d}{d\tau}  \left(\Lambda \circ \widetilde{\rho}  (\tau)\right) (\tau _{0} )
\leq \max _{t \in [0,1], M} |\frac{df
   _{t}}{dt}| \cdot
     \max _{t,M} f _{t} \cdot \Lambda (
 \widetilde{\rho}  (\tau _{0} )) \cdot L = \max _{t \in [0,1], M} |\frac{df
   _{t}}{dt}| \cdot
     \max _{t,M} f _{t} \cdot L \cdot \rho _{2} (\tau _{0} ), 
\end{equation*}
for almost all $\tau _{0} \in [0,1] $.

\end{proof}
\qed
\section{Acknowledgements} I am grateful to Viktor Ginzburg,  Dusa McDuff,  Helmut Hofer, John Pardon, and Dennis Sullivan for
comments and interest. Viktor Ginzburg in particular for introducing me to the Fuller index.
 \bibliographystyle{siam}  
   \bibliography{/home/yasha/texmf/bibtex/bib/link} 
\end {document}